\newcommand{\ceil}[1]{\left \lceil #1 \right \rceil}
\newcommand{\size}[1]{\left \vert #1 \right \vert}
\newcommand{\ttone}[1]{\tau_{#1}}
\def\qed{\hfill\rule{2mm}{2mm}\par\bigskip}
\def\PF{\noindent{\it Proof.}}
\newtheorem{theorem}{Theorem}[section]
\newtheorem{lemma}[theorem]{Lemma}
\newtheorem{proposition}[theorem]{Proposition}
\newtheorem{definition}[theorem]{Definition}
\newtheorem{conjecture}[theorem]{Conjecture}
\newtheorem{theorema}{Theorem}
\newenvironment{proof}{\PF}{\qed}
\begin{document}

\title{New results in $t$-tone coloring of graphs}
\author{
Daniel W. Cranston\thanks{Mathematics Dept., Virginia Commonwealth University,
dcranston@vcu.edu},
Jaehoon Kim\thanks{Mathematics Dept., University of Illinois, kim805@illinois.edu.  
Research partially supported by the Arnold O. Beckman Research Award of the 
University of Illinois at Urbana-Champaign.},
and William B. Kinnersley\thanks{Mathematics Dept., University of Illinois,
wkinner2@illinois.edu.  Research partially supported by
NSF grant DMS 08-38434, ``EMSW21-MCTP: Research Experience for Graduate
Students''.}
}

\maketitle

\begin{abstract}
A $t$-tone $k$-coloring of $G$ assigns to each vertex of $G$ a set of $t$
colors from $\{1, \ldots, k\}$ so that vertices at distance $d$ share fewer
than $d$ common colors.  The {\it $t$-tone chromatic number} of $G$, denoted
$\tau_t(G)$, is the minimum $k$ such that $G$ has a $t$-tone $k$-coloring. 
Bickle and Phillips showed that always $\tau_2(G) \le [\Delta(G)]^2 +
\Delta(G)$, but conjectured that in fact $\tau_2(G) \le 2\Delta(G) + 2$; we
confirm this conjecture when $\Delta(G) \le 3$ and also show that always
$\tau_2(G) \le \ceil{(2 + \sqrt{2})\Delta(G)}$.  For general $t$ we prove that
$\tau_t(G) \le (t^2+t)\Delta(G)$.  
Finally, for each $t\ge 2$ we show that there exist constants $c_1$ and
$c_2$ such that for every tree $T$ we have $c_1 \sqrt{\Delta(T)} \le \tau_t(T)
\le c_2\sqrt{\Delta(T)}$.
%Bickle and Phillips posed several conjectures regarding the relationship between $\tau_2(G)$ and $\Delta(G)$; we confirm one, refute another, and explore a third.  In particular we show that $\tau_2(G) \le \left \lceil (2 + \sqrt{2})\Delta(G) \right \rceil$ for all $G$ and that $\tau_2(G) \le 8$ when $\Delta(G) \le 3$.  For general $t$, we give upper bounds on $\tau_t(G)$ in terms of $\Delta(G)$ over several classes of graphs.
\end{abstract}

\section{Introduction}

In standard vertex coloring, we give colors to the vertices of a graph so
that adjacent vertices get distinct colors.  Several variants of graph
coloring place restrictions on the colors of vertices that are near each
other, but not necessarily adjacent.  In a {\em distance-$k$ coloring}, any 
vertices within distance $k$ of each other must receive distinct colors.  In
an {\em injective coloring}, any two vertices with a common neighbor must
receive distinct colors, but adjacent vertices need not.  
In an {\em $L(2,1)$-labeling} 
each vertex receives a nonnegative integer as its label, such that the labels
on adjacent vertices differ by at least 2 and those on vertices at distance 2
differ by at least 1. 

Bickle and Phillips~\cite{BP} introduced the related notion of {\em $t$-tone
coloring}.  Intuitively, a {\em $t$-tone $k$-coloring} of $G$ assigns to each
vertex of $G$ a set of $t$ colors from $\{1, \ldots, k\}$ so that vertices at
distance $d$ share fewer than $d$ common colors.  This notion is especially appealing when $t = 2$.  In this case, each vertex receives a set of two colors; adjacent vertices receive disjoint sets and vertices at distance 2 receive distinct sets.

Before giving a formal definition, we first establish some basic notation and
terminology.  We write $[k]$ as shorthand for $\{1, \ldots, k\}$ and denote by
${[k] \choose t}$ the family of $t$-element subsets of $[k]$.  We denote the
distance between vertices $u$ and $v$ by $d(u,v)$.  Vertices $u$ and $v$ are
{\em neighbors} if $d(u,v)=1$ and {\em second-neighbors} if $d(u,v) = 2$.

\begin{definition}\label{def_ttone}\cite{BP}
Let $G$ be a graph and $t$ a positive integer.  A {\em $t$-tone $k$-coloring} of $G$ is a function $f: V(G) \rightarrow {[k] \choose t}$ such that $\size{f(u) \cap f(v)} < d(u,v)$ for all distinct vertices $u$ and $v$.  A graph that has a $t$-tone $k$-coloring is {\em $t$-tone $k$-colorable}.  The {\em $t$-tone chromatic number} of $G$, denoted $\tau_t(G)$, is the minimum $k$ such that $G$ is $t$-tone $k$-colorable.
\end{definition}

%Following \cite{BP}, 
Given a $t$-tone coloring $f$ of $G$, we call $f(v)$ the {\em label} of $v$ and
the elements of $[k]$ {\em colors}.  When the meaning is clear, we
omit set notation from labels; that is, we denote the label $\{a, b\}$ by $ab$.
 Note that for each $t$, the parameter $\tau_t$ is monotone: when $H$ is a
subgraph of $G$, every $t$-tone $k$-coloring of $G$ restricts to a $t$-tone
$k$-coloring of $H$, so $\tau_t(H) \le \tau_t(G)$.

%*** do we want the conjectures in the intro or the body?
Bickle and Phillips \cite{BP} established several basic results on $t$-tone
coloring,  many of which focused on the relationship between
$\tau_2(G)$ and $\Delta(G)$.  By looking at proper colorings of the graph
$G^2$, they proved that always $\tau_2(G) \le [\Delta(G)]^2 + \Delta(G)$. 
However, they conjectured that this bound is far from tight:
\begin{conjecture}\cite{BP}\label{conj_maxdegree}
If $G$ is a graph with maximum degree $r$, then $\tau_2(G) \le 2r + 2$.  If
$r\ge 3$, then equality holds only when $G$ contains $K_{r+1}$.  
\end{conjecture}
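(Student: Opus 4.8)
The plan is to separate the two halves of the conjecture---the bound $\tau_2(G) \le 2r+2$ and the characterization of equality---since a naive greedy coloring has essentially no room to spare at $k = 2r+2$. First I would record the easy direction: in $K_{r+1}$ all $r+1$ vertices are pairwise adjacent, so their labels are pairwise disjoint $2$-sets, forcing $2(r+1) = 2r+2$ colors; hence $\tau_2(K_{r+1}) = 2r+2$ and the bound is sharp. By monotonicity of $\tau_2$, any $G$ containing $K_{r+1}$ (which, since $\Delta(G)=r$, must occur as a full component) then has $\tau_2(G) \ge 2r+2$, and with the upper bound this is equality. The real content of the equality statement is the converse: a $K_{r+1}$-free graph with $\Delta(G) = r \ge 3$ satisfies $\tau_2(G) \le 2r+1$.

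To see why $2r+2$ is the right threshold yet hard to reach, I would first analyze greedy coloring in an arbitrary order. When we reach $v$, its label must avoid every color on a neighbor, so it is chosen from the $k - |\bigcup_{u \sim v} f(u)| \ge k - 2\deg(v)$ free colors, and it must differ from the label of each of the at most $\deg(v)(r-1) < r^2$ second-neighbors. With $k = cr$ colors this leaves roughly $\binom{(c-2)r}{2} \approx (c-2)^2 r^2/2$ candidate labels against at most $r^2$ forbidden ones, which is positive precisely when $c > 2 + \sqrt{2}$; this already reproves the abstract's bound $\tau_2(G) \le \ceil{(2+\sqrt{2})\Delta(G)}$. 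At the conjectured value $k = 2r+2$ the free colors number only $\ge 2$, giving a single candidate label, so greedy fails and a global argument is unavoidable.

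I would therefore take a minimal counterexample $G$ and build a list of reducible configurations---local structures whose presence lets one delete a vertex, color the smaller graph by minimality, and extend. Writing $U_v = \bigcup_{u \sim v} f(u)$, a vertex $v$ is reducible whenever the candidate-label count $\binom{k - |U_v|}{2}$ exceeds the number of second-neighbor labels lying in the free palette; this holds when $U_v$ is small or when $v$ has few second-neighbors. Because the neighbor labels are distinct $2$-subsets they tend to overlap, so $|U_v|$ is usually well below $2\deg(v)$: forcing $|U_v| = 2\deg(v)$ requires the neighbors' labels to be pairwise disjoint, i.e.\ the neighborhood to sit inside a clique, which is exactly what ties the extremal case to $K_{r+1}$. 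The heart of the argument is then a discharging proof that a $K_{r+1}$-free graph of maximum degree $r$ must contain a reducible vertex.

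The hard part will be the no-slack regime in which a vertex's neighbors use all $2\deg(v)$ colors yet it still has many second-neighbors, leaving one candidate label that a second-neighbor may already occupy; ruling this out globally in a $K_{r+1}$-free graph is the crux, and the reason the full conjecture is difficult. For that reason I would first settle $\Delta(G) \le 3$, where $k = 8$, every vertex has at most three neighbors and six second-neighbors, and the reducible configurations can be enumerated explicitly. A short discharging or direct case analysis on a minimal $K_4$-free subcubic counterexample should then establish $\tau_2(G) \le 7$, confirming the conjecture for $\Delta(G) \le 3$.
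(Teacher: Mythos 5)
The statement you are attacking is not a theorem of the paper but an open conjecture of Bickle and Phillips, which the paper itself only makes partial progress on---and your proposal does not close it either, because its central step is explicitly deferred. You correctly dispose of the easy direction ($\tau_2(K_{r+1}) = 2r+2$, and a $K_{r+1}$ in a graph of maximum degree $r$ must be a full component), and your greedy computation recovering $\tau_2(G) \le \ceil{(2+\sqrt{2})\Delta(G)}$ matches the paper's Theorem~\ref{2tone_maxdegree}. But the proposed ``discharging proof that a $K_{r+1}$-free graph of maximum degree $r$ must contain a reducible vertex'' is precisely the open content of the conjecture, and you concede as much (``ruling this out globally \dots is the crux''). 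A plan that reduces the conjecture to its own hardest case is not a proof. There is also a concrete technical flaw in the reducibility framework: a 2-tone coloring of $G-v$ obtained from minimality need not be a partial 2-tone coloring of $G$, because deleting $v$ can increase the distance between two of its neighbors beyond 2, so they may legally receive \emph{identical} labels in $G-v$, which becomes illegal once $v$ is restored---and then no extension to $v$ exists no matter how many colors are free. The paper flags exactly this pitfall and, in its $\Delta(G)\le 3$ theorem, works around it by adding edges (such as $u_{k-1}u_1$ and $u_ku_2$) to the reduced graph before invoking minimality; your sketch ignores it.

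Your final paragraph also overreaches. The claim that a ``short discharging or direct case analysis'' should give $\tau_2(G) \le 7$ for $K_4$-free subcubic graphs is part (b) of Conjecture~\ref{conj_cubic}, which the paper leaves open. What the paper actually proves for $\Delta(G)\le 3$ is only the weaker bound $\tau_2(G) \le 8$, and not by discharging: it takes a smallest counterexample, shows it is 3-regular and $K_{2,3}$-free, deletes a shortest cycle (with the edge additions above), and needs three separate flexibility lemmas (Lemmas~\ref{cubic_freedom_0}, \ref{cubic_freedom}, and~\ref{cubic_finish_1}) to maintain three candidate extensions at every step while recoloring along the cycle. Moreover, the paper shows the Heawood graph---3-regular, girth 6, hence $K_4$-free and locally tree-like---satisfies $\tau_2 = 7$. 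So the bound of 7 you hope to prove is already tight on graphs with no short cycles at all, which is strong evidence that purely local counting of candidates versus obstructions around a single vertex cannot establish it; some genuinely global argument is required, and neither you nor the paper has one.
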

When $G$ is 3-regular, they posed the following stronger conjecture:
\begin{conjecture}\cite{BP}\label{conj_cubic}
If $G$ is a 3-regular graph, then:
\begin{enumerate}
\item[(a)] $\tau_2(G) \le 8$;
\item[(b)] $\tau_2(G) \le 7$ when $G$ does not contain $K_4$;
\item[(c)] $\tau_2(G) \le 6$ when $G$ does not contain $K_4 - e$.
\end{enumerate}
\end{conjecture}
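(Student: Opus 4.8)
The plan is to prove the stronger statement that every graph $G$ with $\Delta(G)\le 3$ satisfies $\ttone{2}(G)\le 8$, with equality only when $K_4\subseteq G$, and moreover $\ttone{2}(G)\le 6$ whenever $K_4-e\not\subseteq G$; restricting to $3$-regular graphs then yields all three parts of the conjecture, since $K_4$-freeness together with the equality characterization forces $\ttone{2}(G)\le 7$. First I would record the reformulation that makes the problem tractable: because every label is a $2$-set, the condition $\size{f(u)\cap f(v)}<d(u,v)$ imposes nothing at distance $\ge 3$, forces \emph{disjoint} labels on neighbors, and forces \emph{distinct} labels on second-neighbors. Thus a $2$-tone coloring is exactly an assignment of pairs from $\binom{[k]}{2}$ that is disjoint across edges and injective across paths of length $2$. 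The three thresholds are explained by two extremal computations that I would carry out at the outset: the four mutually adjacent vertices of $K_4$ need four pairwise-disjoint pairs, so $\ttone{2}(K_4)=8$, while in $K_4-e$ the two nonadjacent vertices are each driven into the two colors avoided by the dominating edge and so cannot be separated below $7$ colors, giving $\ttone{2}(K_4-e)=7$. These certify that the bounds are tight and tell me exactly which configurations the argument must exclude.

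The main argument is induction on $\size{V(G)}$ through a minimal counterexample. A short counting step disposes of low-degree vertices for the $\le 8$ bound: if $v$ has degree at most $2$, its neighbors occupy at most four colors, leaving at least $\binom{8-4}{2}=6$ candidate pairs disjoint from all neighbors, against at most four forbidden second-neighbor labels, so any coloring of $G-v$ extends. Hence a minimal counterexample to the $\le 8$ bound is cubic, and the heart of the proof is to exhibit in every cubic graph a reducible configuration: a small subgraph whose deletion leaves a smaller graph of maximum degree at most $3$, colorable by minimality, from which the coloring extends. Deleting a single cubic vertex is not enough, since its three neighbors may use six colors and leave only $\binom{8-6}{2}=1$ candidate pair, which a second-neighbor may already forbid; to gain room I would delete a well-chosen edge or short path, or recolor a neighbor by swapping its label with a conflicting second-neighbor so as to liberate a pair for $v$. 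A discharging argument, assigning charges that reflect degree and the lengths of the short cycles through each vertex and redistributing along those cycles, would then show such a configuration is unavoidable, contradicting minimality; the equality characterization drops out by tracking when the single surviving pair is forced, which pins down $K_4$.

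Parts (a) and (b) both follow from this sharp statement, so the remaining work is part (c), the genuinely tighter claim obtained by rerunning the whole scheme with only $k=6$ colors under the hypothesis $K_4-e\not\subseteq G$. Here even a degree-$2$ vertex is critical: its neighbors may leave only $\binom{6-4}{2}=1$ disjoint pair against as many as four forbidden second-neighbor labels, so the easy counting reductions collapse and recoloring becomes essential rather than optional. I expect this to be the main obstacle. The difficulty is twofold: controlling recolorings so that freeing a pair at one vertex does not propagate fresh conflicts into its neighborhood, and simultaneously tuning the discharging so that it excludes precisely the configuration $K_4-e$ that the computation $\ttone{2}(K_4-e)=7$ shows is genuinely obstructive. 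Because the budget of $\binom{6}{2}=15$ pairs is so tight, the reducible configurations must be chosen to certify extendability and to mesh with the discharging at the same time, and it is this simultaneous constraint in the $K_4-e$-free case, rather than any single local computation, where the real effort lies.
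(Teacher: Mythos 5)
Your proposal founders on a fact you could not have known but which is the paper's central point about this statement: part (c) of the conjecture is \emph{false}, and the paper refutes it rather than proves it. The Heawood graph is 3-regular with girth 6, so it contains neither $K_4$ nor $K_4-e$, yet the paper shows it is not 2-tone 6-colorable (its 2-tone chromatic number is exactly 7). Your plan rests on the belief that the extremal computations $\tau_2(K_4)=8$ and $\tau_2(K_4-e)=7$ ``tell me exactly which configurations the argument must exclude,'' and that the obstruction to 6-colorability is precisely the local configuration $K_4-e$. That intuition is exactly what fails: in the Heawood graph the obstruction is global, not local. Since any two vertices in the same partite set have a common neighbor, the seven distinct labels used on one side can contain neither a complementary pair $\{ab,cd,ac,bd\}$ nor a disjoint triple $\{ab,cd,ef\}$, and the paper shows no seven labels from $\binom{[6]}{2}$ can avoid both. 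Consequently no discharging or recoloring scheme can establish part (c); the bulk of your proposed effort --- which you yourself identify as ``where the real effort lies'' --- is aimed at a false statement.

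The rest of the plan is also not in order. Part (b) is not proven in the paper (it remains open; the paper establishes only part (a)), and your route to it passes through an equality characterization (``$\tau_2(G)=8$ only when $K_4\subseteq G$'') that you never establish --- it is itself part of an open conjecture. Your sketch for part (a) is a reducible-configuration-plus-discharging outline whose key steps (``delete a well-chosen edge or short path, or recolor a neighbor\ldots,'' ``a discharging argument\ldots would then show such a configuration is unavoidable'') are left entirely unspecified; by contrast, the paper's proof of (a) deletes a shortest cycle $C$, adds edges between prescribed off-cycle neighbors so that the inductive coloring gives them disjoint labels, and then extends along $C$ using dedicated flexibility lemmas. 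Finally, one concrete technical slip: your claim that for a vertex $v$ of degree at most 2 ``any coloring of $G-v$ extends'' is wrong as stated, since a 2-tone coloring of $G-v$ need not be a partial 2-tone coloring of $G$ --- distances in $G-v$ can exceed those in $G$, so the two neighbors of $v$ may legally receive identical labels in $G-v$ while being second-neighbors in $G$. The paper sidesteps this by never passing to $G-v$ for this step: it colors greedily within $G$ itself, in non-increasing order of distance from $v$.
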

\noindent
Since they also characterized all 2-tone 5-colorable 3-regular graphs, this conjecture would yield a complete characterization of the 2-tone chromatic numbers of 3-regular graphs.

%*** add specific theorem numbers?
In Section \ref{2tone}, we focus on 2-tone colorings, with an eye toward
proving Conjectures~\ref{conj_maxdegree} and~\ref{conj_cubic}. 
%raised in~\cite{BP}.  
As progress toward Conjecture \ref{conj_maxdegree}, we give a short proof that
always $\tau_2(G) \le \ceil{(2 + \sqrt{2})\Delta(G)}$.  Simple modifications of
this argument yield better bounds when $G$ is bipartite or chordal.  We next
refute part (b) of Conjecture \ref{conj_cubic} by showing that the Heawood
graph has 2-tone chromatic number 7.  Finally, our main result in
Section~\ref{2tone} confirms part (a) of Conjecture \ref{conj_cubic}:
\begin{theorema}
If $G$ is a graph with $\Delta(G) \le 3$, then $\tau_2(G) \le 8$.
\end{theorema}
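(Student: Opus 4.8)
The plan is to argue by contradiction via a vertex-minimal counterexample. Let $G$ be a graph with $\Delta(G)\le 3$ that is not $2$-tone $8$-colorable, chosen with $|V(G)|$ minimum; since $\tau_2$ is determined componentwise, I may assume $G$ is connected. I first record the two easy reductions that force $G$ to be cubic. If $v$ has degree $1$ with neighbor $u$, then after $2$-tone $8$-coloring $G-v$ by minimality, I must choose $f(v)$ disjoint from $f(u)$ (leaving $\binom{6}{2}=15$ candidate labels) and distinct from the at most two second-neighbors of $v$; since $15-2>0$, I can extend, contradicting minimality. If $\deg v=2$ with neighbors $u_1,u_2$, the labels $f(u_1),f(u_2)$ use at most $4$ colors, so at least $\binom{4}{2}=6$ labels are disjoint from both, while $v$ has at most $4$ second-neighbors; as $6-4>0$, again I can extend. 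Hence $\delta(G)=3$ and $G$ is cubic.

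The heart of the proof is the cubic case, where a naive one-vertex deletion fails: deleting a single vertex $v$ with neighbors $x,y,z$ can leave as few as one label disjoint from $f(x),f(y),f(z)$ (precisely when these use six colors), and that lone label may already be claimed by a second-neighbor of $v$. My plan is therefore to delete $v$ but treat $\{v,x,y,z\}$ as a small patch to be recolored \emph{simultaneously} inside the $2$-tone $8$-coloring of $G-v$ guaranteed by minimality. In $G-v$ each of $x,y,z$ has degree $2$, so it acquires genuine recoloring freedom: the admissible labels for $x$ are those disjoint from the labels of its two remaining neighbors and distinct from its external second-neighbors, a list of size at least $\binom{4}{2}-4=2$, and similarly for $y$ and $z$. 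The induced constraints on the patch are that $x,y,z$ receive pairwise distinct labels (they are mutual second-neighbors through $v$), that each avoids the external second-neighbors of $v$ where relevant, and that some label remain disjoint from all three choices to serve as $f(v)$.

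I would then show this bounded list-coloring problem on the star $K_{1,3}$ is always solvable by a counting/Hall-type argument: select $f(x),f(y),f(z)$ from their lists so as to minimize the number of colors they jointly occupy, thereby maximizing the supply of labels left disjoint for $v$, and verify that size-$2$ lists cannot conspire to block every completion. The main obstacle I anticipate is precisely this tightest regime, where all three lists have size exactly $2$ and overlap heavily; here I expect to need an explicit recoloring move — swapping one neighbor to its alternate label to liberate a color for $v$ — together with a check that such a swap never violates that neighbor's own external constraints. A secondary complication is that short cycles among $x,y,z$ (a triangle, or shared second-neighbors forced by a $4$-cycle) alter the distance relations and hence the list sizes, so the final write-up will likely split into a handful of cases according to how the neighborhoods of $x,y,z$ overlap, with the large-girth case being the cleanest and the configurations containing $K_4$ or $K_4-e$ handled separately, consistent with the special role these play in Conjecture~\ref{conj_cubic}.
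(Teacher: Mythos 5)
Your outline (minimal counterexample, reduce to the cubic case, delete-and-repair) matches the paper's strategy at the top level, but the repair step as you designed it has a genuine gap, and it is exactly the gap that the paper's more elaborate construction exists to avoid. In the cubic case you delete a single vertex $v$, take whatever 2-tone 8-coloring of $G-v$ minimality hands you, and recolor the patch $\{v,x,y,z\}$, asserting that you will ``verify that size-$2$ lists cannot conspire to block every completion.'' They can conspire. Suppose the two external neighbors of $x$ carry labels $12$ and $34$, and the four external second-neighbors of $x$ carry labels $57, 58, 67, 68$; then $x$'s list is exactly $\{56,78\}$. Nothing prevents the inherited coloring of $G-v$ from presenting the same pattern at $y$ and at $z$: in a cubic graph of large girth these three neighborhoods are pairwise far apart, so the patterns do not interact and are simultaneously consistent. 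Then all three lists equal $\{56,78\}$. Since $x,y,z$ are pairwise at distance 2 through $v$, they need three pairwise \emph{distinct} labels drawn from a two-element list, which is impossible. Your anticipated fix---swapping one neighbor to its alternate label to liberate a color for $v$---does not address this failure mode at all: the obstruction is not a shortage of colors for $v$ but identical lists blocking pairwise distinctness. Repairing it forces you to recolor vertices outside the patch, and there is no a priori bound on how far that cascade propagates. Note also that this happens in the large-girth situation, which you predicted would be ``the cleanest'' case.

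The paper's proof is organized around precisely this control problem: minimality only guarantees that \emph{some} coloring of the smaller graph exists, and an adversarial such coloring can be unpatchable, so one must engineer the smaller graph to force its colorings to be useful. The paper does this by deleting an entire shortest cycle $C$ (not one vertex) and, crucially, applying minimality not to $G-C$ but to $G-C$ \emph{plus added edges} joining specified off-cycle neighbors ($u_{k-1}u_1$, and $u_ku_2$ when $C$ is not a triangle). The added edges force the inherited coloring to assign disjoint labels to those boundary vertices; this slack is then propagated around the cycle by lemmas guaranteeing three extension choices at every step (Lemmas \ref{cubic_freedom_0} and \ref{cubic_freedom}) and finally cashed in at the last cycle vertex (Lemma \ref{cubic_finish_1}). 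Your plan contains no analogous mechanism for constraining the coloring that minimality provides, and that mechanism is the actual mathematical content of the theorem. A smaller, fixable slip: in your degree-2 reduction, a 2-tone 8-coloring of $G-v$ need not be a partial 2-tone coloring of $G$, because the two neighbors of $v$ are at distance 2 in $G$ but may be far apart in $G-v$ and hence may receive identical labels; the paper sidesteps this (as it must, having flagged exactly this subtlety after Definition 2.1) by never passing to $G-v$ in that case, instead coloring $G$ itself iteratively in non-increasing order of distance from $v$ via Lemma \ref{cubic_extend}.
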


In Section \ref{ttone}, we consider $t$-tone colorings for general $t$.  Our
main result is: 
\begin{theorema}
For each $t$ there exists a constant $c=c(t)$ such that $\tau_t(T) \le
c\sqrt{\Delta(T)}$ whenever $T$ is a tree, and this bound is asymptotically
tight.  
\end{theorema}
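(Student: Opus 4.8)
The plan is to prove the two directions separately: the lower bound (asymptotic tightness) comes from stars, while the upper bound comes from a greedy coloring along a breadth-first ordering, analyzed by a union bound. For tightness, I would take the star $K_{1,\Delta}$ with center $z$ and leaves $u_1,\dots,u_\Delta$. Since $d(u_i,u_j)=2$, the leaf labels are $t$-sets pairwise sharing at most one color, and two $t$-sets sharing at most one color contain no common pair of colors; hence the $\Delta\binom{t}{2}$ color-pairs arising from the leaf labels are distinct elements of $\binom{[k]}{2}$. Thus $\Delta\binom{t}{2}\le\binom{k}{2}$, which forces $k\ge\sqrt{t(t-1)}\sqrt{\Delta}$, so $\tau_t(K_{1,\Delta})\ge\sqrt{t(t-1)}\,\sqrt{\Delta}$ and no upper bound better than $c\sqrt{\Delta}$ is possible.

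For the upper bound I would set $k=\ceil{c\sqrt{\Delta}}$ for a constant $c=c(t)$ to be chosen, root $T$ arbitrarily, and process vertices in order of nondecreasing depth. When it is $v$'s turn, I claim a uniformly random $S\in\binom{[k]}{t}$ is a valid label for $v$ with positive probability, so the greedy procedure never gets stuck. For an already-colored $u$ with $d(u,v)=d\le t$ (distances exceeding $t$ impose no constraint), the label $S$ conflicts with $u$ exactly when $\size{S\cap f(u)}\ge d$; a union bound over the $\binom{t}{d}$ choices of which $d$ colors of $f(u)$ land in $S$, together with $\binom{k-d}{t-d}/\binom{k}{t}=\prod_{i=0}^{d-1}\tfrac{t-i}{k-i}\le(t/k)^d$, gives $\Pr[\size{S\cap f(u)}\ge d]\le\binom{t}{d}(t/k)^d$.

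The structural heart is the bound $N_d\le 2\Delta^{\floor{d/2}}$ on the number $N_d$ of already-colored vertices at distance $d$ from $v$. A vertex at distance $d$ is reached by going up $a$ edges to a common ancestor and then down $b=d-a$ edges; to have depth at most $v$'s (hence be already colored) we need $b\le a$, so $a\ge\ceil{d/2}$ and $b\le\floor{d/2}$. There is one ancestor at each height and at most $\Delta^{b}$ downward paths of length $b$ from it, whence $N_d\le\sum_{b=0}^{\floor{d/2}}\Delta^b\le 2\Delta^{\floor{d/2}}$. The total conflict probability is therefore at most $\sum_{d=1}^{t}2\Delta^{\floor{d/2}}\binom{t}{d}(t/k)^d$; with $k\approx c\sqrt{\Delta}$ the powers of $\Delta$ cancel for even $d$ and leave a spare $\Delta^{-1/2}$ for odd $d$, so each term is at most $\binom{t}{d}(t/c)^d$ up to an absolute constant and the whole sum is below $(1+t/c)^t-1$, which is less than $1$ once $c$ is a large enough function of $t$ (roughly $c\gtrsim t^2$). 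For the finitely many small values of $\Delta$ not handled by this asymptotic estimate, the crude bound $\tau_t(T)\le(t^2+t)\Delta$ from the previous section keeps $\tau_t(T)$ below $c\sqrt{\Delta}$ after enlarging $c$.

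The main obstacle is the distance-$2$ analysis, which is exactly what pins the rate at $\sqrt{\Delta}$: a vertex of degree $\Delta$ has about $\Delta$ second-neighbors, each contributing conflict probability $\Theta((t/k)^2)=\Theta(1/\Delta)$, so their combined contribution is $\Theta(1)$ and cannot be driven to $0$ by taking $\Delta$ large—only by choosing $c$ large. The accompanying delicate point is verifying that the higher even distances do not accumulate, i.e.\ that $N_d(t/k)^d$ stays summable in $d$ rather than growing; this hinges on the clean cancellation $\Delta^{\floor{d/2}}(t/k)^d=\Theta_t(c^{-d})$ furnished by the count $N_d\le 2\Delta^{\floor{d/2}}$, so getting that counting argument exactly right is where the real work lies.
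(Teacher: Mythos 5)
Your proof is correct, and it takes a genuinely different route from the paper's in both halves. For the upper bound, the paper embeds $T$ in a complete $(\Delta-1)$-ary tree, reduces to even $t$ via monotonicity, and assigns $t+1$ disjoint palettes to the levels modulo $t+1$; this makes vertices within distance $t$ on different levels automatically label-disjoint, so only same-level (hence even-distance) conflicts need counting, with at most $\Delta^d \le k^{2d}$ same-level vertices at distance $2d$. Your BFS-greedy argument dispenses with palettes and handles all distances at once; your counting lemma $N_d \le 2\Delta^{\lfloor d/2\rfloor}$ --- earlier vertices in BFS order must take at least $\lceil d/2\rceil$ of their $d$ steps upward --- is precisely the structural fact that the paper's level trick is engineered to exploit. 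What you lose is the clean separation (you must also handle odd distances, where the spare factor $\Delta^{-1/2}$ saves you); what you gain is a sharper constant (no factor-$(t+1)$ palette overhead, roughly $c \approx Ct^2$) and no need for the even-$t$ reduction. For tightness, the paper quotes the Bickle--Phillips exact formula $\tau_2(T) = \lceil (5+\sqrt{1+8\Delta(T)})/2\rceil$ and invokes monotonicity $\tau_t \ge \tau_2$, whereas your pair-counting on the star $K_{1,\Delta}$ (labels of leaves pairwise share at most one color, so their families of $2$-subsets are disjoint, giving $\Delta\binom{t}{2} \le \binom{k}{2}$) is self-contained and additionally shows the optimal constant must grow at least linearly in $t$. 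One small point common to both arguments: the tightness claim implicitly requires $t \ge 2$ (for $t=1$ every tree satisfies $\tau_1 \le 2$), so your star bound, like the paper's appeal to $\tau_2$, covers exactly the cases where the claim is meaningful.
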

For general graphs, our best bound is $\tau_t(G) \le (t^2 + t)\Delta(G)$.  This
result implies that, for fixed $\Delta(G)$, we have $\tau_t(G) \le ct^2$ for
some constant $c$.  The asymptotics of this bound are near-optimal with respect
to $t$, since for each $r \ge 3$ there exist a constant $c$ and graphs $G_t$ such that $\Delta(G_t) = r$ and $\tau_t(G_t) \ge ct^2/\lg t$.  Finally, when $G$ has degeneracy at most $k$, we prove $\tau_t(G) \le kt + kt^2[\Delta(G)]^{1-1/t}$.

% \begin{proposition}\label{mono_tone}
% If $t < t^{\prime}$ and $G$ is any graph, then $\tau_{t^\prime}(G) \le \tau_{t^\prime}(G)$.
% \end{proposition}
% \begin{proof}
% Given a graph $G$ and a $t^\prime$-tone coloring of $G$, arbitrarily discarding $t^\prime-t$ colors from each label of each vertex of $G$ leaves a $t$-tone coloring, since doing so cannot increase the size of the intersection of two labels.
% \end{proof}

\section{2-tone Coloring}\label{2tone}

In this section we focus on 2-tone coloring.  We first attack Conjecture
\ref{conj_maxdegree}.  It was shown in \cite{BP} that always $\tau_2(G) \le
[\Delta(G)]^2 + \Delta(G)$; we improve this result by giving an upper bound on
$\tau_2(G)$ that is linear in
$\Delta(G)$, rather than quadratic.  This proof---along with several others
throughout the paper---proceeds by building a $t$-tone coloring of a graph
iteratively, coloring one vertex at a time. 
  
\begin{definition}
A {\em partial $t$-tone $k$-coloring} of a graph $G$ is a function $f: S \rightarrow {[k] \choose t}$, with $S \subseteq V(G)$, such that $\size{f(u) \cap f(v)} < d(u,v)$ whenever $u,v \in S$.  Vertices not in $S$ are {\em uncolored}.  An {\em extension} of $f$ to an uncolored vertex $v$ is a partial coloring $f^\prime$ that assigns a label to $v$ but otherwise agrees with $f$. 
\end{definition}

It is important to note that a $t$-tone $k$-coloring of a subgraph $H$ of $G$ need not be a partial $t$-tone $k$-coloring of $G$, since the distance between two vertices may be smaller in $G$ than in $H$.

\begin{theorem}\label{2tone_maxdegree}
For every graph $G$, we have $\tau_2(G) \le \ceil{(2 + \sqrt{2})\Delta(G)}$.
\end{theorem}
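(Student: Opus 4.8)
The plan is to construct a 2-tone coloring greedily, processing the vertices of $G$ in an arbitrary order and never recoloring. Set $\Delta = \Delta(G)$ and $k = \ceil{(2 + \sqrt{2})\Delta}$. Since any two labels are $2$-element sets, we always have $\size{f(u) \cap f(v)} \le 2$, so the constraint $\size{f(u) \cap f(v)} < d(u,v)$ holds automatically whenever $d(u,v) \ge 3$. Thus it suffices to satisfy the requirements only for neighbors (labels must be disjoint) and for second-neighbors (labels must merely differ), and the entire argument reduces to showing that whenever we reach an uncolored vertex $v$, some admissible label from $\binom{[k]}{2}$ remains available.

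First I would bound the colors that $v$ is forbidden to use outright. Each already-colored neighbor $u$ contributes the (at most) two colors of $f(u)$, all of which must be avoided because $f(v)$ must be disjoint from $f(u)$. As $v$ has at most $\Delta$ neighbors, the set $B$ of blocked colors satisfies $\size{B} \le 2\Delta$, so the set $A = [k] \setminus B$ of usable colors has $\size{A} \ge k - 2\Delta \ge \ceil{\sqrt{2}\,\Delta}$. The label $f(v)$ must be a pair drawn from $A$, giving $\binom{\size{A}}{2}$ candidate labels. Next I would bound the candidates that are still excluded: the only remaining obstruction is that $f(v)$ must differ from $f(w)$ for each already-colored second-neighbor $w$. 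Since $v$ has at most $\Delta(\Delta - 1)$ second-neighbors, at most $\Delta(\Delta-1)$ of the pairs within $A$ are forbidden, so an admissible label exists provided
\[
\binom{\size{A}}{2} > \Delta(\Delta - 1).
\]
Using $\size{A} \ge \sqrt{2}\,\Delta$ together with the monotonicity of $x \mapsto x(x-1)/2$, the left side is at least $\frac{\sqrt{2}\,\Delta(\sqrt{2}\,\Delta - 1)}{2} = \Delta^2 - \Delta/\sqrt{2}$, which strictly exceeds $\Delta^2 - \Delta = \Delta(\Delta - 1)$ because $1/\sqrt{2} < 1$. Hence the greedy step never fails and the coloring extends to all of $G$.

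I expect the only real content to be pinning down the constant. The method caps the usable colors at $k - 2\Delta$, while the number of pairs that must be avoided grows like $\Delta^2$, so we need $\binom{k - 2\Delta}{2} > \Delta^2$, i.e.\ essentially $k - 2\Delta \ge \sqrt{2}\,\Delta$; this is precisely what forces the multiplier $2 + \sqrt{2}$, and balancing these two competing quadratics is the crux of the estimate. The remaining points are routine bookkeeping: verifying the integrality of the ceiling so that $\size{A} \ge \ceil{\sqrt{2}\,\Delta}$, and noting that pairs meeting $B$ need not be separately subtracted since they are already absent from the $\binom{\size{A}}{2}$ candidates. The same framework, with sharper bounds on $\size{A}$ or on the number of second-neighbors, should yield the improved constants promised for the bipartite and chordal cases.
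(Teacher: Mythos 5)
Your proposal is correct and matches the paper's proof essentially step for step: both color greedily in an arbitrary vertex order, observe that neighbors block at most $2\Delta$ colors leaving at least $\ceil{\sqrt{2}\Delta}$ usable ones, and win by comparing $\binom{\sqrt{2}\Delta}{2}$ candidate labels against the at most $\Delta(\Delta-1)$ labels forbidden by second-neighbors. Your write-up just makes explicit the arithmetic ($\Delta^2 - \Delta/\sqrt{2} > \Delta(\Delta-1)$) that the paper leaves as an assertion.
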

\begin{proof}
Let $k = \ceil{(2 + \sqrt{2})\Delta(G)}$ and let $V(G) = \{v_1, \ldots, v_n\}$. 
Starting with all vertices uncolored, we extend our partial coloring to $v_1, v_2, \ldots, v_n$ in order. 
%we claim that this is always possible.  
When extending to $v_i$, we need only enforce two constraints.  First, the label
on $v_i$ cannot contain any color appearing on $v_i$'s neighbors; there
remain at least $\ceil{\sqrt{2}\Delta(G)}$ other colors, so at least ${\sqrt{2}\Delta(G) \choose 2}$ labels are available.  Next, the label on $v_i$ cannot appear on any second-neighbor of $v_i$; this condition forbids at most  
$\Delta(G)(\Delta(G) - 1)$ labels.  Since
${\sqrt{2}\Delta(G) \choose 2} > \Delta(G)(\Delta(G)-1)$, some label remains
for use on $v_i$.
\end{proof}

Similar approaches yield tighter bounds on $\tau_2(G)$ for bipartite graphs and chordal graphs.

\begin{proposition}
If $G$ is a bipartite graph, then $\tau_2(G) \le 2\ceil{\sqrt{2}\Delta(G)}$.
\end{proposition}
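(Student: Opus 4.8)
The plan is to mimic the greedy argument from Theorem~\ref{2tone_maxdegree}, but to exploit bipartiteness so that second-neighbor constraints become cheaper to satisfy. Let $k = 2\ceil{\sqrt{2}\Delta(G)}$, write $\Delta = \Delta(G)$, and fix a bipartition $V(G) = A \cup B$. The key structural observation is that in a bipartite graph, for any fixed vertex $v$, its neighbors all lie in one part and its second-neighbors all lie in the other part (the same part as $v$). I would like to leverage this by splitting the color set $[k]$ into two blocks of size $\ceil{\sqrt{2}\Delta}$ each, say $C_A$ and $C_B$, and insisting that every vertex in $A$ receives a label drawn from $C_A$ and every vertex in $B$ receives a label from $C_B$. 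Because a vertex's neighbors lie in the opposite part, they draw their colors from the opposite block, so the usual ``neighbors must be color-disjoint'' constraint is satisfied automatically and for free.

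With that color-block restriction in place, the only remaining constraints when coloring a vertex $v \in A$ (say) come from its second-neighbors, which all lie in $A$ and hence also draw from $C_A$. So I would process the vertices in any order and, when extending to $v$, count the labels within $v$'s block that are still available: the block supplies $\binom{\ceil{\sqrt{2}\Delta}}{2}$ possible labels, and the second-neighbors of $v$ forbid at most $\Delta(\Delta-1)$ of them (each of the at most $\Delta$ neighbors has at most $\Delta-1$ other neighbors). Since $\binom{\ceil{\sqrt{2}\Delta}}{2} \ge \binom{\sqrt{2}\Delta}{2} > \Delta(\Delta-1)$ — the same numerical inequality already used in Theorem~\ref{2tone_maxdegree} — a valid label always remains, and the greedy extension never gets stuck.

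The main thing to verify carefully is that restricting each part to its own color block does not secretly overcount or undercount the forbidden labels. In particular I would double-check that vertices in opposite parts can never create a collision: a label on $v \in A$ lies entirely in $C_A$ while any neighbor's label lies entirely in $C_B$, so their intersection is empty, which more than satisfies the distance-$1$ requirement $\size{f(u)\cap f(v)} < 1$. For vertices at distance $2$, both endpoints lie in the same part, so the second-neighbor exclusion is exactly what the greedy step enforces, giving $\size{f(u)\cap f(v)} < 2$. For vertices at distance $d \ge 3$ the requirement $\size{f(u)\cap f(v)} < d$ is automatic since labels have only two colors, so no further checking is needed. The only genuine obstacle is confirming the counting inequality survives the ceiling, but since we use a full block of size $\ceil{\sqrt{2}\Delta}$ in each part rather than the $\ceil{\sqrt{2}\Delta}$ leftover colors of the general bound, the arithmetic is identical to the one already established, and the factor-of-two improvement comes precisely from never needing to reserve colors to separate a vertex from its neighbors.
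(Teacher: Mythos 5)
Your proof is correct and follows essentially the same approach as the paper's: both partition the colors into two disjoint palettes of size $\ceil{\sqrt{2}\Delta(G)}$, assign one palette to each partite set so that adjacency constraints are satisfied automatically, and then greedily color within each part using the count $\binom{\sqrt{2}\Delta(G)}{2} > \Delta(G)(\Delta(G)-1)$ against second-neighbor obstructions. Your additional verification that distance-$d$ constraints for $d \ge 3$ are vacuous when $t=2$ is a fine (if implicit in the paper) detail, but the argument is the same.
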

\begin{proof}
A {\em palette} is a set of colors; we construct a 2-tone coloring of $G$ using
two disjoint palettes, each of size $\ceil{\sqrt{2}\Delta(G)}$.  We assign each
partite set its own palette and color the vertices in each set using only
colors from its palette.  Since adjacent vertices are assured disjoint labels, 
it suffices to ensure that vertices at distance 2 receive distinct labels. 

We color each partite set independently.  Within a partite set, we order the
vertices arbitrarily and color iteratively.  Each vertex $v$ has at most
$\Delta(G)(\Delta(G)-1)$ second-neighbors.  Since each palette admits ${\sqrt{2}\Delta(G) \choose 2}$ labels, we may always extend a partial coloring to $v$.
\end{proof}

A {\em simplicial elimination ordering} of a graph $G$ is an ordering $v_1, \ldots, v_n$ of $V(G)$ such that the later neighbors of each vertex form a clique; it is well-known that chordal graphs are precisely those graphs having simplicial elimination orderings.

\begin{proposition}
If $G$ is a chordal graph, then $\tau_2(G) \le \ceil{(1 + \sqrt{6}/2)\Delta(G)} + 1$.  % sure would be nice to argue away this "+1".
\end{proposition}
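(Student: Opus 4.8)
The plan is to mimic the iterative-coloring strategy of Theorem~\ref{2tone_maxdegree}, but to exploit the chordal structure by coloring along a simplicial elimination ordering. Let $v_1, \ldots, v_n$ be such an ordering, so that for each $i$ the later neighbors $N(v_i) \cap \{v_{i+1}, \ldots, v_n\}$ form a clique. I would color the vertices in \emph{reverse} order $v_n, v_{n-1}, \ldots, v_1$; then at the moment we color $v_i$, its already-colored neighbors are exactly its later neighbors, which form a clique $C$. Writing $c = \size{C}$, the crucial first gain is that, because $C$ is a clique, its vertices carry pairwise disjoint labels and hence use exactly $2c$ distinct colors. Thus only $2c$ colors are forbidden on $v_i$, leaving $k - 2c$ usable colors and $\binom{k-2c}{2}$ candidate labels.

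The key structural step---and the part I expect to be the main obstacle---is bounding the number of colored second-neighbors, since these are the vertices whose labels $v_i$ must avoid. I claim that every colored second-neighbor of $v_i$ is adjacent to a vertex of $C$. Indeed, suppose a colored vertex $w$ (so $w = v_\ell$ with $\ell > i$) were adjacent to an \emph{uncolored} neighbor $u = v_j$ of $v_i$ (so $j < i$). Then both $v_i$ and $w$ are later neighbors of $v_j$, so they lie in a common clique and are adjacent---contradicting $d(v_i, w) = 2$. Hence a colored second-neighbor can only be reached through $C$. Since each of the $c$ clique vertices has at most $\Delta(G) - c$ neighbors outside the clique $\{v_i\} \cup C$, there are at most $c(\Delta(G) - c)$ colored second-neighbors, and so at most that many labels are forbidden on this account. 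This is a substantial improvement over the generic bound of $\Delta(G)(\Delta(G)-1)$ and is what makes the smaller coefficient possible.

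It then remains to verify that $\binom{k - 2c}{2} > c(\Delta(G) - c)$ for every $0 \le c \le \Delta(G)$, which guarantees an available label. Treating $c$ as a continuous variable $x\Delta(G)$ and setting $k \approx a\Delta(G)$, the inequality reduces to $\tfrac12(a - 2x)^2 > x(1-x)$; minimizing the left-hand-side slack over $x \in [0,1]$ leads to the quadratic $2a^2 - 4a - 1 = 0$, whose positive root is exactly $a = 1 + \sqrt{6}/2$. The worst case occurs at an interior value of $c$, not at the extremes. Finally I would confirm that the stated value $k = \ceil{(1 + \sqrt 6/2)\Delta(G)} + 1$ makes the inequality \emph{strict} for all integer $c$; the rounding together with the additive $+1$ supplies the needed slack at the threshold and, in the extreme case $c = \Delta(G)$ (where $\{v_i\}\cup C$ is a copy of $K_{\Delta(G)+1}$), guarantees $k \ge 2\Delta(G) + 2$ so that a label for $v_i$ still exists.
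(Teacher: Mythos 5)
Your proposal is correct and follows essentially the same route as the paper: color in the reverse of a simplicial elimination ordering so that the already-colored neighbors of each vertex form a clique $C$, observe that every colored second-neighbor must be adjacent to a vertex of $C$ (since a common neighbor occurring later in the elimination order would force adjacency), and reduce to the inequality $\binom{k-2c}{2} > c(\Delta(G)-c)$. Your discriminant analysis of $(a-2x)^2 \ge 2x(1-x)$, yielding $2a^2-4a-1=0$ and $a = 1+\sqrt{6}/2$, simply carries out the ``straightforward computation'' that the paper leaves implicit.
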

\begin{proof}
Let $k = \ceil{(1 + \sqrt{6}/2)\Delta(G)} + 1$.  Let $v_1, \ldots, v_n$ be the
reverse of a simplicial elimination ordering of $G$.
%; note that, for each $i$, the earlier neighbors of $v_i$ form a clique.  
We construct a 2-tone
$k$-coloring of $G$ by coloring iteratively with respect to this ordering.

Suppose we want to color $v_i$.  Let $S$ be the set of
earlier neighbors of $v_i$, and let $d = \size{S}$.  
If $v_j$ is a later neighbor of $v_i$, then
by our choice of ordering, all earlier neighbors of $v_j$ are adjacent to $v_i$.
Hence every earlier second-neighbor of $v_i$ is adjacent to some vertex in $S$.
Each vertex in $S$ is adjacent to $v_i$ itself along with the other $d-1$
vertices of $S$.  Hence $v_i$ has at most $d(\Delta(G)-d)$ earlier second-neighbors.

As many as $2d$ colors may appear on $S$, so at least
$k-2d$ colors remain.  We have ${k-2d \choose 2}$ labels using these colors,
so we need ${k-2d \choose 2} > d(\Delta(G)-d)$.  
Straightforward computation shows that this inequality holds whenever $k\ge
\ceil{(1+\sqrt{6}/2)\Delta(G)}+1$.
\end{proof}

\begin{proposition}
For every $\epsilon > 0$, there exists an $r_0$ such that whenever $r > r_0$, if $G$ is a chordal graph with maximum degree $r$, then $\tau_2(G) \le (2 + \epsilon)r$.
\end{proposition}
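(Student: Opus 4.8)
The plan is to revisit the greedy coloring along a reverse simplicial elimination ordering used for chordal graphs above, but to account for second-neighbors far more carefully. Recall that when we color $v_i$ with earlier-neighbor clique $S$ of size $d$, its label is forced to avoid the set $C(S)$ of the $2d$ colors appearing on $S$, leaving $k-2d$ usable colors and $\binom{k-2d}{2}$ candidate labels. Taking $k = \alpha r$ with $\alpha = 2$, the crude bound $d(r-d)$ on the number of forbidden second-neighbor labels exceeds $\binom{k-2d}{2}$ exactly when $d/r \in (2/3, 1)$, so it is precisely the regime of large cliques that the earlier argument cannot handle. The key new observation is that a second-neighbor $w$ can rule out a label for $v_i$ only if $f(w) \subseteq [k] \setminus C(S)$: if $f(w)$ meets $C(S)$, then $f(w) \ne f(v_i)$ automatically, since $f(v_i)$ avoids $C(S)$. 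Thus only those second-neighbors whose labels avoid all $2d$ colors of $C(S)$ impose genuine restrictions, and I would call these the \emph{effective} second-neighbors of $v_i$.

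First I would show that, under a sufficiently balanced assignment of labels, only a small fraction of the $d(r-d)$ second-neighbors are effective. Concretely, color the vertices in reverse simplicial order, choosing each label uniformly at random among those still available. For a fixed second-neighbor $w$, the probability that its (roughly uniform) label avoids all $2d$ colors of $C(S)$ is about $\left(\frac{k-2d}{k}\right)^2$, so the expected number of effective second-neighbors is at most about $d(r-d)\left(\frac{k-2d}{k}\right)^2$. Dividing by the number $\binom{k-2d}{2} \approx \frac12(k-2d)^2$ of candidate labels, the ratio is about $\frac{2d(r-d)}{k^2} \le \frac{1}{2\alpha^2}$, which equals $\frac18$ when $\alpha = 2$. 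In other words, at every step the expected number of genuinely forbidden labels is at most roughly one eighth of the number available, \emph{uniformly in} $d$, so in expectation at least $\frac78$ of the candidate labels remain free. The slack $\epsilon r$ above $2r$ also guarantees $k - 2d \ge 2$ (so that $\binom{k-2d}{2} \ge 1$) even when $d = r$, i.e.\ when $S \cup \{v_i\}$ is a copy of $K_{r+1}$; taking $r_0 > 2/\epsilon$ handles this, consistent with the $2r+2$ needed to color a clique.

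The main obstacle is converting this expectation bound into a bona fide coloring, since the labels of earlier vertices are neither independent nor exactly uniform (each is conditioned on validity) and $C(S)$ is itself determined by the random choices. I expect to control this either through the Lovász Local Lemma, formulated so that the bad event at $v_i$ (that all candidate labels are occupied by effective second-neighbors) depends only on choices within a bounded distance and hence has dependency degree polynomial in $r$, or through the semi-random (nibble) method, maintaining throughout the process a balance invariant that prevents any color from being over-represented among the second-neighbors of any vertex. The comfortable constant-factor gap --- a ratio of about $\frac18$ rather than something approaching $1$ --- leaves ample room to absorb the concentration losses, and all lower-order terms are swallowed by the $\epsilon r$ slack once $r > r_0$, yielding $\tau_2(G) \le (2+\epsilon)r$.
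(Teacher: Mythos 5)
Your observation that only ``effective'' second-neighbors (those whose labels avoid all of $C(S)$) can obstruct $v_i$ is correct and clever, and your expectation calculation is right as far as it goes: with $k=\alpha r$ the expected ratio of effective obstructions to candidates is at most $\tfrac{1}{2\alpha^2}$, uniformly in $d$. But the proposal has a genuine gap at exactly the point you defer: nothing in the write-up converts this expectation heuristic into a coloring, and the tools you name do not apply in any straightforward way. The sequential random-greedy process is not a product probability space (each label is drawn conditioned on all earlier choices), so the Lov\'asz Local Lemma cannot be invoked directly; if you instead restore independence by letting every vertex pick a uniform label from $\binom{[k]}{2}$, then the adjacency constraints become bad events of probability $\Theta(1/k)$ with dependency degree $\Theta(r^2)$, and the LLL condition forces $k=\Omega(r^2)$ --- worse than the trivial bound. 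Keeping the adjacency constraints greedy and randomizing only over candidates brings back the conditioning problem. Moreover, the per-step expectation bound of $\tfrac18$ gives, via Markov, only a constant failure probability at each step, which cannot be union-bounded over $n$ vertices; you would need concentration of the number of effective second-neighbors of every vertex, over heavily dependent choices (note also that a second-neighbor $w$ of $v_i$ is adjacent to a vertex of $S$ and may be colored before or after parts of $S$, so its conditional label distribution given $C(S)$ is not uniform). Making this rigorous is nibble-method or entropy-compression territory, i.e., the hard part of the proof, not a detail absorbed by ``the comfortable constant-factor gap.''

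For comparison, the paper's proof takes an entirely different and much shorter route: it cites Kr\'al's theorem that the square of a chordal graph of maximum degree $r$ is $cr^{3/2}$-degenerate. Greedily coloring along a degeneracy ordering of $G^2$, each vertex sees at most $2r$ colors on its neighbors and at most $cr^{3/2}$ earlier second-neighbors, so $k+2r$ colors suffice once $\binom{k}{2}>cr^{3/2}$, i.e., $k\approx\sqrt{2c}\,r^{3/4}$. This gives $\tau_2(G)\le 2r+O(r^{3/4})\le(2+\epsilon)r$ for large $r$, with no probabilistic machinery at all. If you want to salvage your approach, the realistic options are either to import a result of that type (degeneracy of $G^2$), or to genuinely execute a semi-random argument --- but as written, the proposal is a plan rather than a proof.
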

\begin{proof}
Let $G$ be a chordal graph with maximum degree $r$.  Kr\'al \cite{K} showed that, for some constant $c$, the graph $G^2$ is $cr^{3/2}$-degenerate.  Thus, there is some ordering $v_1, \ldots, v_n$ of $V(G)$ such that each vertex has at most $cr^{3/2}$ earlier second-neighbors.  Let us color iteratively with respect to this ordering using $k + 2r$ colors, for some $k$ to be specified later.  When coloring $v_i$, as many as $2r$ colors may appear on its neighbors; at least $k$ other colors remain.  Thus we may color $v_i$ so long as it has fewer than ${k \choose 2}$ earlier second-neighbors; taking $k \ge \sqrt{2c}r^{3/4} + 1$ suffices.  Hence $\tau_t(G) \le 2r + \sqrt{2c}r^{3/4} + 1$, from which the claim follows.
\end{proof}

% We next turn our attention to graphs with maximum degree at most 3.  Bickle and Phillips characterized 3-regular graphs having 2-tone chromatic number 5 and conjectured a complete characterization of the 2-tone chromatic numbers of 3-regular graphs.  The following is a slightly-reworded form of their conjecture:
% \begin{conjecture}\cite{BP}\label{conj_cubic}
% If $G$ is a 3-regular graph, then:
% \begin{enumerate}
% \item[(a)] $\tau_2(G) \le 8$;
% \item[(b)] $\tau_2(G) \le 7$ when $G$ does not contain $K_4$;
% \item[(c)] $\tau_2(G) \le 6$ when $G$ does not contain $K_4 - e$.
% \end{enumerate}
% \end{conjecture}

We next turn our attention to 3-regular graphs and Conjecture \ref{conj_cubic}.  
Later in this section, we prove part (a) of Conjecture \ref{conj_cubic} by showing that $\tau_2(G) \le 8$ whenever $\Delta(G) \le 3$; first we disprove part (c) by showing that the Heawood Graph, which has girth 6, has 2-tone chromatic number 7.

\begin{theorem}
The Heawood Graph is not 2-tone 6-colorable.
\end{theorem}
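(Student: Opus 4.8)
The plan is to exploit the identification of the Heawood graph with the point--line incidence graph of the Fano plane. First I would record the distances. The two partite sets are the $7$ points and the $7$ lines; any two points lie on a unique common line and any two lines meet in a unique point, so points are pairwise at distance $2$ and lines are pairwise at distance $2$, while a point and a line are at distance $1$ if incident and distance $3$ otherwise. Hence a $2$-tone $6$-coloring amounts to assigning to each point and each line a $2$-element subset of $[6]$ so that (i) the $7$ point-labels are distinct, (ii) the $7$ line-labels are distinct, and (iii) for every incident point--line pair the two labels are disjoint (the distance-$3$ condition is vacuous for $2$-element sets, and distinctness at distance $2$ is automatic for distinct $2$-sets). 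Viewing a $2$-subset of $[6]$ as an edge of $K_6$, this asks for two $7$-edge subgraphs $G_P,G_L\subseteq K_6$ whose edges are vertex-disjoint along every flag.

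Next I would analyze each color through its marginals. For a color $c$, let $A_c$ be the set of points and $B_c$ the set of lines whose label contains $c$, and put $a_c=\size{A_c}$, $b_c=\size{B_c}$; these are exactly the degrees of $c$ in $G_P$ and $G_L$, so $\sum_c a_c=\sum_c b_c=14$. Condition (iii) forces every point of $A_c$ to lie off every line of $B_c$. The key geometric lemma is that this non-incidence, together with $a_c\le 5$ (a color lies in at most $5$ edges of $K_6$), forces $a_c+b_c\le 5$: in the Fano plane one line covers $3$ points, two lines cover $5$, and any three or more lines cover at least $6$, so when $b_c\ge 1$ at most $7-(b_c+2)$ points remain for $A_c$. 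One checks that equality $a_c+b_c=5$ holds only for $(a_c,b_c)\in\{(5,0),(4,1),(1,4),(0,5)\}$, the cases $b_c\in\{2,3\}$ giving only $a_c+b_c\le 4$. Summing, $\sum_c(a_c+b_c)=28\le 6\cdot 5=30$, so the total deficit from $30$ is only $2$; hence the six sums $a_c+b_c$ form either $(5,5,5,5,5,3)$ or $(5,5,5,5,4,4)$, and in particular at least four colors have one of the extreme types above.

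To finish I would combine this with the requirement that the marginal sequences $(a_c)$ and $(b_c)$ be realizable as distinct edge-sets, i.e.\ as degree sequences of simple $7$-edge graphs on $6$ vertices. A first pass disposes of the profile $(5,5,5,5,5,3)$: listing the distributions of extreme types compatible with $\sum_c a_c=14$ produces point-degree sequences such as $(5,5,4,0,0,0)$, $(5,5,2,1,1,0)$, $(4,4,4,2,0,0)$, and $(4,4,3,1,1,1)$, and a routine Havel--Hakimi check shows that none is the degree sequence of a simple graph.

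The genuine obstacle is the profile $(5,5,5,5,4,4)$, where marginal feasibility alone is not enough. For instance, colors of types $(4,1),(4,1),(1,4),(1,4),(2,2),(2,2)$ yield the sequences $(a_c)=(b_c)=(4,4,2,2,1,1)$, which are graphical and satisfy the lemma color-by-color, yet should still admit no globally consistent placement. Ruling this out requires tracking how the classes $A_c,B_c$ interlock across colors and flags rather than merely bounding their sizes. Here I would use the available symmetries: permuting the six colors lets me fix the label of one line as $\{5,6\}$, which forces the three points on that line to receive distinct edges of the $K_4$ on $\{1,2,3,4\}$; propagating the disjointness constraints through the remaining incidences, together with the extreme-type classification, should reduce the surviving possibilities to a short but delicate case analysis. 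I expect this global-consistency step---showing that locally feasible marginals can never be assembled into an actual labeling---to be the main difficulty of the proof.
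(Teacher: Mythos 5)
Your setup is correct and your key lemma is sound: the distance structure of the incidence graph, the reduction to two $7$-edge subgraphs of $K_6$ linked by flag-disjointness, and the bound $a_c+b_c\le 5$ (with equality only for types $(5,0),(4,1),(1,4),(0,5)$) all check out, as does the resulting restriction to the two profiles $(5,5,5,5,5,3)$ and $(5,5,5,5,4,4)$. But the proof has a genuine gap, and you name it yourself: the profile $(5,5,5,5,4,4)$ is not ruled out. Your counting argument genuinely cannot close this case, since (as your own example shows) the type assignment $(4,1),(4,1),(1,4),(1,4),(2,2),(2,2)$ gives degree sequences $(4,4,2,2,1,1)$ on both sides that \emph{are} graphical --- so no amount of marginal or degree-sequence analysis suffices; one must argue about how the sets $A_c$ and $B_c$ can sit inside the Fano plane, across all six colors simultaneously. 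The paragraph beginning ``Here I would use the available symmetries'' is a plan, not a proof: fixing one line's label as $\{5,6\}$ and ``propagating the disjointness constraints'' is exactly the delicate global step that remains to be done, and it is where all the remaining difficulty lives. (A smaller issue: your elimination of the profile $(5,5,5,5,5,3)$ lists only some of the possible point-degree sequences, ``such as''; a complete enumeration gives about ten sequences, though all of them do fail to be graphical, so this case can be finished by your method.)

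For comparison, the paper's proof avoids the two-sided bookkeeping entirely by working inside a single partite set. Since the $7$ point-labels must be distinct $2$-subsets of $[6]$, the paper proves a purely set-theoretic fact: any $7$ labels from $\binom{[6]}{2}$ contain either a \emph{complementary pair} $\{ab,cd,ac,bd\}$ or a \emph{disjoint triple} $\{ab,cd,ef\}$. It then kills each configuration with a short geometric argument using the unique-common-neighbor property of the Fano plane: a complementary pair forces two distinct vertices of the other side to both be labeled by the two remaining colors, and a disjoint triple forces a $6$-cycle whose unique ``opposite'' vertex leads to a repeated label at distance $2$. This one-sided approach is what lets the paper close the argument cleanly; your two-sided marginal approach is a legitimately different strategy, but as written it stops short of a proof.
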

\begin{proof}
Let $G$ denote the Heawood Graph.
Recall that $G$ is the incidence graph of the Fano Plane; thus it is bipartite,
and every two distinct vertices in the same partite set have exactly one common
neighbor (and hence lie at distance 2).  Call a 2-tone 6-coloring of $G$ a {\em good coloring}.  For distinct colors $a,b,c,d$, call the set of labels $\{ab, cd, ac, bd\}$ a {\em complementary pair}.  For distinct colors $a,b,c,d,e,f$, call the set of labels $\{ab, cd, ef\}$ a {\em disjoint triple}.
Let $A$ and $B$ denote the partite sets of $G$.

We prove three claims: {\bf (1)} No good coloring uses all four labels in a
complementary pair on vertices in the same partite set; {\bf (2)} No good
coloring uses all three labels in a disjoint triple on vertices in the same
partite set; {\bf (3)} For any subset $L$ of ${[6]\choose 2}$ with $|L|=7$,
either $L$
contains a complementary pair or it contains a disjoint triple.
The theorem immediately follows from these claims by supposing $G$
has a good coloring and letting $L$ be the set of labels used on $A$.

{\bf (1)} Suppose instead that the claim is false.  By symmetry, labels $12, 34,
13,$ and $24$ all appear on vertices in $A$.  The common neighbor of the
vertices labeled $12$ and $34$ must receive label $56$, as must the common
neighbor of the vertices labeled $13$ and $24$.  Since $G$ is 3-regular, the two
vertices labeled $56$ are distinct; since they lie at distance 2, the coloring
is invalid.

{\bf (2)} Suppose instead that the claim is false.  By symmetry, labels $12,
34,$ and $56$ all appear on vertices in $A$.  These vertices
cannot all have a common neighbor $u$, since then $u$ would have no valid
label.  Thus they lie on a 6-cycle, and the three vertices of
this 6-cycle in $B$ must also have labels $12, 34,$ and $56$.  

Consider a vertex $v\in A$ not
adjacent to any vertex of this 6-cycle. (There is exactly one such vertex.)
% exists in each
% of $A$ and $B$.)
The label on $v$ cannot be $12, 34,$ or $56$, so without loss of
generality, it is $13$.  The common neighbor of $v$ and the vertex in $A$ having label
$56$ must have label $24$, and the common neighbor of this vertex and the 
vertex in $B$ having label $56$ must have label $13$.  So two vertices in 
$A$ have label $13$; they must be distinct, since only one is
adjacent to a vertex on the 6-cycle. % and the other is not.  
%These two vertices lie at distance 2, contradicting the validity of the coloring.
Since they lie at distance 2, the coloring is invalid.

{\bf (3)} %Finally, we claim that for any subset $L$ of ${[6] \choose 2}$ with $\size{L} = 7$, either $L$ contains a complementary pair or it contains a disjoint triple.
Consider a color appearing in the most elements of $L$; without loss of
generality, this color is 1.  Let $L_1$ be the set of labels in
$L$ that contain
1.  Note that $3 \le \size{L_1} \le 5$.  We consider 3 cases.

If $\size{L_1} = 5$, then exactly two labels in $L$ do not
appear in $L_1$.  If these labels are disjoint, then $L$
contains a disjoint triple; otherwise, $L$ contains a complementary pair.

If $\size{L_1} = 4$, then without loss of generality $L_1 =
\{12, 13, 14, 15\}$.  If two labels in $L$ contain 6, then
$L$ contains a complementary pair.  Similarly, if $L -
L_1$ contains two non-disjoint labels not using 6, then $L$
contains a complementary pair.  Thus we may suppose that $L -
L_1$ contains two disjoint labels not using 6 and one label using 6.
Now the label using 6 is disjoint from one of the labels not using 6; these two
labels, together with some label from $L_1$, form a disjoint triple.

If $\size{L_1} = 3$, then without loss of generality $L_1 =
\{12, 13, 14\}$.  Let $S_1 = \{23, 24, 34\}$, let $S_2 = \{25, 35, 45\}$, and
let $S_3 = \{26, 36, 46\}$.  If $L$ contains two or more labels from
any single $S_i$, then these labels, together with two labels from
$L_1$, form a complementary pair.  Thus we may suppose $L$
contains exactly one label from each $S_i$ and also contains the label $56$.
Now the label in $L \cap S_1$, the label $56$, and some element of
$L_1$ form a disjoint triple.
%We can now conclude that the Heawood Graph has no good coloring.  Suppose otherwise, and let $A$ be the set of labels used on one of the partite sets.  $A$ contains either a complementary pair or a disjoint triple; in either case, we have a contradiction.
\end{proof}

Below we give a 2-tone 7-coloring of the Heawood graph, which completes the proof that its 2-tone chromatic number is 7.

\begin{center}
%\begin{figure}
\begin{tikzpicture}[thick,scale=2]
\tikzstyle{blavert}=[circle, draw, fill=black, inner sep=0pt, minimum width=5pt]
\tikzstyle{redvert}=[circle, draw, fill=red, inner sep=0pt, minimum width=5pt]
\draw \foreach \i in {2, 4, ..., 14}
{
(\i*360/14:1) node[blavert]{} -- (\i*360/14+360/14:1)
(\i*360/14+360/14:1) node[redvert]{} -- (\i*360/14+720/14:1)
(\i*360/14:1)  -- (\i*360/14+5*360/14:1)
};
\draw \foreach \i/\lab in 
{1/14, 2/25, 3/36, 4/47, 5/51, 6/62, 7/73, 8/14, 9/25, 10/36, 11/47, 12/51, 13/62, 14/73}
{
(\i*360/14:1.2) node[]{$\lab$} 
};

%cycle;
\draw (0,-1.5) node {Fig.~1: A 2-tone 7-coloring of the Heawood graph.};
\end{tikzpicture}
%\end{figure}
\end{center}

We next show that $\tau_2(G) \le 8$ whenever $\Delta(G) \le 3$, thus verifying part (a) of Conjecture \ref{conj_cubic}.  The proof requires careful attention to detail, so we isolate some of the more delicate arguments in lemmas.  Before stating the lemmas, we introduce some terminology.

\begin{definition}
Let $f$ be a partial $2$-tone coloring of a graph $G$ and let $v$ be an uncolored vertex.  A {\em valid} label for $v$ is a label by which $f$ can be extended to $v$.  A {\em free} color at $v$ is one not appearing on any neighbor of $v$.  A {\em candidate} label for $v$ is a label containing only free colors.  An {\em obstruction} of $v$ is a candidate label that is not valid (because it appears on some second-neighbor of $G$).
\end{definition}

Our first lemma is short and simple, but provides a good introduction to the techniques that appear throughout the proof.

\begin{lemma}\label{cubic_extend}
Let $G$ be a graph with maximum degree at most 3.  Let $f$ be a partial 2-tone 8-coloring of $G$ and let $v$ be an uncolored vertex.  If $v$ has at least one uncolored neighbor and at least one uncolored second-neighbor, then $f$ can be extended to $v$.  
\end{lemma}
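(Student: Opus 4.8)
The plan is to extend $f$ to $v$ by a direct counting argument: I will show that the number of candidate labels for $v$ strictly exceeds the number of obstructions, so that some candidate label is valid.

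First I would bound the number of candidate labels from below. Since $v$ has at most three neighbors and at least one of them is uncolored, at most two neighbors are colored. Each colored neighbor carries two colors, so at most four colors fail to be free, leaving at least four free colors. These free colors yield at least ${4 \choose 2} = 6$ candidate labels.

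Next I would bound the number of obstructions from above. A candidate label can fail to be valid only by coinciding with the label of some second-neighbor: the distance-$1$ constraint is met automatically, since a candidate label uses only free colors and hence is disjoint from every neighbor's label, while the distance-$d$ constraints for $d \ge 3$ are vacuous for two-element labels. Thus each colored second-neighbor accounts for at most one obstruction, namely its own label. Now each of $v$'s at most three neighbors has at most two neighbors other than $v$, so $v$ has at most six second-neighbors; since at least one of these is uncolored, at most five are colored. Hence $v$ has at most five obstructions.

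Because $6 > 5$, at least one candidate label avoids every obstruction and is therefore valid; extending $f$ by this label completes the proof. When $\deg(v) < 3$ the same bookkeeping only improves, since fewer colored neighbors give more free colors (hence more candidate labels) and fewer neighbors give fewer second-neighbors (hence fewer obstructions), so the strict inequality persists. The step that deserves the most care is the obstruction bound: one must confirm both that a candidate label fails to be valid only by equaling a second-neighbor's label, and that the two hypotheses contribute \emph{independent} slack---the uncolored neighbor improving the free-color count and the uncolored second-neighbor reducing the second-neighbor count---so that the gain from each can be claimed simultaneously.
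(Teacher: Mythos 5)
Your proposal is correct and is essentially the paper's own proof: at most two colored neighbors leave at least four free colors (hence at least six candidate labels), while the uncolored second-neighbor caps the obstructions at five, so a valid label exists. The extra details you supply (disjointness from neighbors' labels, vacuity of distance-$d$ constraints for $d \ge 3$, one obstruction per colored second-neighbor) are exactly the bookkeeping the paper leaves implicit.
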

\begin{proof}
At least four colors are free at $v$, so it has at least six candidate labels. 
Since $v$ has an uncolored second-neighbor, $v$ has at most five obstructions, so some candidate is valid.  
%Since one of its second-neighbors is uncolored, $v$ has at most five obstructions, so some candidate is valid.  
\end{proof}

In the main proof we first color all vertices except for those on some induced
cycle $C$; we then iteratively extend our partial coloring along $C$.  We
will need to maintain some flexibility while doing so, and the next two lemmas
provide this desired freedom.

\begin{lemma}\label{cubic_freedom_0}
Let $G$ be a 3-regular graph, let $v$ be a vertex of $G$, and let $w_1$ and $w_2$ be distinct neighbors of $v$.  Let $f$ be a partial coloring of $G$ that leaves $v$, $w_1$, and $w_2$ uncolored, and let $f_1$ and $f_2$ be distinct extensions of $f$ to $w_1$.  If two second-neighbors of $v$ do not yield obstructions under any $f_i$, then some $f_i$ can be extended to $v$ in three different ways.
\end{lemma}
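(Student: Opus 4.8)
The plan is to bound, for each of the two extensions $f_i$, the number of \emph{valid} labels for $v$ as (number of candidate labels) minus (number of obstructions), and to show that at least one of $f_1,f_2$ leaves three valid labels. Throughout $f$ is a partial $2$-tone $8$-coloring. Write $w_3$ for the third neighbor of $v$ and set $L_i=f_i(w_1)$, so $L_1\ne L_2$ are distinct $2$-subsets of $[8]$, while $w_2$ remains uncolored under both extensions. Since $G$ is $3$-regular, $v$ has at most six second-neighbors, and the hypothesis supplies two of them that never obstruct; hence under either $f_i$ there are at most four obstructions.

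First I would dispatch the easy regimes by crude counting. If $w_3$ is uncolored, then only $w_1$ constrains the free colors at $v$, leaving six free colors and hence $\binom{6}{2}=15$ candidate labels; subtracting at most four obstructions leaves far more than three valid labels. If $w_3$ is colored and its label meets $L_i$ for some $i$, then under that $f_i$ the colored neighbors $w_1,w_3$ use only three colors, giving five free colors, $\binom{5}{2}=10$ candidates, and at least six valid labels; I simply use that $f_i$. So I may assume $w_3$ is colored with a label $W$ disjoint from both $L_1$ and $L_2$.

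In this remaining case put $U=[8]\setminus W$, a set of six colors containing $L_1$ and $L_2$. Under $f_i$ the free colors are exactly $U\setminus L_i$ (four colors), so there are exactly $\binom{4}{2}=6$ candidate labels, all lying in $\binom{U}{2}$. Let $O$ be the set of labels in $\binom{U}{2}$ appearing on the second-neighbors of $v$ other than the two guaranteed not to obstruct; since there are at most four such second-neighbors, $|O|\le 4$. Every obstruction under $f_i$ is the label of such a second-neighbor lying in $U\setminus L_i$, so the obstructions under $f_i$ are exactly $\{\ell\in O:\ell\cap L_i=\emptyset\}$, and the number of valid labels under $f_i$ is $6-|\{\ell\in O:\ell\cap L_i=\emptyset\}|$. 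It therefore suffices to find an $i$ for which at most three labels of $O$ avoid $L_i$.

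The crux---and the only place the two-extension hypothesis is really used---is a small counting fact. If $|O|\le 3$ we are done at once, so suppose $|O|=4$. Since $L_1\ne L_2$ we have $|L_1\cup L_2|\in\{3,4\}$, so the number of $2$-subsets of $U$ disjoint from $L_1\cup L_2$ is $\binom{6-|L_1\cup L_2|}{2}\le\binom{3}{2}=3$. As $|O|=4$, some $\ell\in O$ meets $L_1\cup L_2$; choosing $i$ with $\ell\cap L_i\ne\emptyset$ removes $\ell$ from the obstruction count, leaving at most three labels of $O$ avoiding $L_i$ and hence at least three valid labels under $f_i$. I expect this last case to be the main obstacle: the naive bound of six candidates minus four obstructions yields only two valid labels, and extracting the third requires playing $L_1$ and $L_2$ off against the fixed obstruction set $O$.
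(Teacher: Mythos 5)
Your proof is correct, and it isolates exactly the right hard case. The paper's own argument is shorter and avoids your case split: letting $S_i$ denote the set of free colors at $v$ under $f_i$, it observes that $|S_i|\ge 4$ always, and that either some $|S_i|\ge 5$ (which covers your first two cases) or else $S_1\ne S_2$ with both of size $4$ (your third case); in every situation the two extensions yield at least $6+6-\binom{3}{2}=9$ candidate labels between them, and since the labels on second-neighbors are fixed by $f$, at most $4$ of these candidates can ever be obstructions, leaving at least $5$ labels that are valid under at least one $f_i$ --- whence by pigeonhole some single $f_i$ admits at least $3$ valid labels. Your argument replaces this union-plus-pigeonhole count with its dual: rather than showing the two candidate sets are jointly large, you show that the obstruction pool $O$ cannot lie inside both candidate sets, because at most $\binom{3}{2}=3$ labels of $\binom{U}{2}$ avoid $L_1\cup L_2$, so when $|O|=4$ some obstruction fails to be a candidate under one $f_i$, which then has at most $3$ obstructions against its $6$ candidates. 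Both proofs hinge on the same combinatorial fact --- the two candidate sets share at most $3$ labels --- so they are essentially equivalent in content; the paper's packaging is more uniform (no cases, and no need to locate which extension works), while yours pinpoints where the difficulty sits and explicitly identifies the good extension, at the cost of some extra bookkeeping.
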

\begin{proof}
Let $S_i$ be the set of free colors at $v$ under $f_i$.  Under each $f_i$, at
most four colors appear on neighbors of $v$, so $\size{S_i} \ge 4$.  Either
some $S_i$ contains at least five colors, or $S_1 \not = S_2$; in either case,
the $f_i$ yield at least nine candidate labels between them.  Since $v$ has at
most four obstructions, the two $f_i$ together yield at least five valid
labels, so by the Pigeonhole Principle 
some $f_i$ admits three extensions to
$v$.
\end{proof}
\begin{lemma}\label{cubic_freedom}
Let $G$ be a 3-regular graph, let $v$ be a vertex of $G$, and let $w_1$ and $w_2$ be distinct neighbors of $v$.  Let $f$ be a partial coloring of $G$ that leaves $v$, $w_1$, and $w_2$ uncolored, and let $f_1$, $f_2$, and $f_3$ be distinct extensions of $f$ to $w_1$.  If some second-neighbor of $v$ does not yield an obstruction under any $f_i$, then some $f_i$ can be extended to $v$ in three different ways.
\end{lemma}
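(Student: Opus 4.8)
The plan is to prove that $\sum_{i=1}^{3} |V_i| \ge 7$, where $V_i$ denotes the set of valid labels for $v$ under $f_i$; since there are only three extensions, the Pigeonhole Principle then forces $|V_i| \ge 3$ for some $i$, as desired. Write $w_3$ for the third neighbor of $v$, and let $S_i$ be the set of colors free at $v$ under $f_i$. Because $w_2$ is uncolored, only $w_1$ and $w_3$ can place colors on $v$'s neighborhood, so $|S_i| \ge 4$ and $f_i$ offers at least $\binom{4}{2} = 6$ candidate labels; hence the candidate total, counted with multiplicity over the three extensions, satisfies $\sum_i \binom{|S_i|}{2} \ge 18$.

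Next I would set up the bookkeeping for obstructions. A label $\ell$ lying on a colored second-neighbor of $v$ obstructs $f_i$ exactly when $\ell \subseteq S_i$, so writing $c(\ell) = |\{i : \ell \subseteq S_i\}|$, the number of obstructions summed over the three extensions equals $\sum_\ell c(\ell)$, the sum ranging over the (at most six) distinct labels on colored second-neighbors. Thus $\sum_i |V_i| = \sum_i \binom{|S_i|}{2} - \sum_\ell c(\ell)$, and in the tight case $|S_1| = |S_2| = |S_3| = 4$ it suffices to show $\sum_\ell c(\ell) \le 11$. The hypothesis enters precisely here: the obstruction-free second-neighbor is either uncolored or carries a label with $c(\ell) = 0$, so at most five of the labels $\ell$ have positive count, and each satisfies $c(\ell) \le 3$.

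The crux is therefore the combinatorial claim that the five largest values of $c(\cdot)$ sum to at most $11$. In the tight case each $S_i$ is the complement, within the six colors not used on $w_3$, of the label $e_i := f_i(w_1)$, so $c(\ell)$ counts how many of the three distinct pairs $e_1, e_2, e_3$ are disjoint from $\ell$. A label achieves $c(\ell) = 3$ exactly when it lies in $R := [8] \setminus (f(w_3) \cup e_1 \cup e_2 \cup e_3)$. If $e_1, e_2, e_3$ span only three colors, they form a triangle and $|R| = 3$, giving three labels of count $3$; but every other label contains a triangle color, which lies in two of the $e_i$ and so is free under just one extension, forcing count at most $1$, and the top five counts total $3 + 3 + 3 + 1 + 1 = 11$. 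Otherwise $|R| \le 2$ yields at most one label of count $3$, so the top five counts total at most $3 + 2 + 2 + 2 + 2 = 11$. Either way $\sum_\ell c(\ell) \le 11$ and $\sum_i |V_i| \ge 7$.

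I expect the main obstacle to be this structural claim, and especially the fact that both extreme configurations of $e_1, e_2, e_3$---the triangle and the star---land exactly at the threshold $11$, leaving no slack. A secondary point is to dispatch the non-tight cases, where $w_3$ is uncolored or where some $f_i(w_1)$ shares a color with $f(w_3)$ so that $|S_i| > 4$: there the gain in $\sum_i \binom{|S_i|}{2}$ outpaces the extra high-count labels created, and I would verify by the same counting that $\sum_i |V_i| \ge 7$ continues to hold.
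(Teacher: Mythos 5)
Your proof is correct, but it is organized genuinely differently from the paper's. The paper argues case by case: if some $S_i$ has at least five free colors, that single extension already has at least ten candidates against at most five obstructions; otherwise all $\size{S_i}=4$ and the $S_i$ are pairwise distinct, and the paper shows directly that not all three extensions can have four obstructions --- first observing that if $S_1$ and $S_2$ share only two colors then $f_1$ and $f_2$ have at most one common obstruction (so $4+4-1>5$ is impossible), and then, in the remaining case $S_1=\{1,2,3,4\}$, $S_2=\{1,2,3,5\}$, pinning down the five obstructing labels and exhibiting valid labels for $f_3$ explicitly. You instead prove the aggregate inequality $\sum_i \size{V_i}\ge 7$ by double counting obstructions through the multiplicity function $c(\ell)$ and bounding the five largest multiplicities by $11$, finishing with pigeonhole; your key lemma --- the triangle/star dichotomy for the labels $e_1,e_2,e_3$ on $w_1$ --- does not appear in the paper. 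The two arguments bottom out at the same extremal configuration: the paper's hard case $S_3\supseteq\{1,2,3\}$ (say $S_3=\{1,2,3,6\}$) is exactly your triangle $e_1=56$, $e_2=46$, $e_3=45$ with $R=\{1,2,3\}$, and the paper's forced obstructions $12,13,23$ are precisely your count-$3$ labels. What your approach buys is symmetry and a clean quantitative statement with an explicit (tight) threshold; what the paper's buys is that each case ends with concrete valid labels in hand and no multiplicity bookkeeping. One remark: your non-tight cases are even easier than your closing sentence suggests --- if some $\size{S_i}\ge 5$ then $\sum_i \binom{\size{S_i}}{2}\ge 10+6+6=22$ while the crude bound $\sum_\ell c(\ell)\le 5\cdot 3=15$ already gives $\sum_i\size{V_i}\ge 7$ (and the paper dispatches this case faster still, since that one extension alone has at least $10-5=5$ valid labels), so no delicate balancing is needed there.
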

\begin{proof}
Let $S_i$ be the set of colors free at $v$ under $f_i$.  Under each $f_i$, at most four colors appear on neighbors of $v$, so $\size{S_i} \ge 4$.  If some $S_i$ contains five or more colors, then $v$ has at least ten candidate labels and at most five obstructions under $f_i$, so $f_i$ admits at least five extensions to $v$.  Otherwise, since the $f_i$ assign different labels to $w_1$, no two $S_i$ are the same.  Since $v$ has at least six candidate labels under each $f_i$, it suffices to show that $v$ cannot have four obstructions under each $f_i$ simultaneously.

% There should be a cleaner way to argue this
Without loss of generality, $S_1 = \{1, 2, 3, 4\}$.  Since $S_2 \not = S_1$, we may assume $5 \in S_2$.  If additionally $S_2$ contains some other color not in $S_1$, then at most one label is a candidate under both $f_1$ and $f_2$; in this case $v$ has at most one common obstruction under $f_1$ and $f_2$, so it cannot have four obstructions under both $f_1$ and $f_2$.  Hence we may assume $S_2 = \{1, 2, 3, 5\}$.  Now $f_1$ and $f_2$ yield three common candidates, namely $12, 13,$ and $23$; if $v$ does not have three valid labels under either $f_i$, then all three common candidates must be obstructions.  Moreover, of the two remaining obstructions, one is in $\{14, 24, 34\}$ and the other is in $\{15, 25, 35\}$.  If $S_3$ contains $1, 2,$ and $3$, then without loss of generality $S_3 = \{1, 2, 3, 6\}$, and $f_3$ can be extended via $16$, $26$, and $36$.  Otherwise at most one of $12$, $13$, and $23$ is an obstruction under $f_3$, and again $f_3$ admits three extensions to $v$. 
\end{proof}

Our final lemma helps us leverage the flexibility ensured by Lemma \ref{cubic_freedom} to complete a partial coloring.
 
\begin{lemma}\label{cubic_finish_1}
Let $G$ be a 3-regular graph.  Let $v$ be a vertex of $G$, let $w_1,w_2,$ and $w_3$ be its neighbors, and let $x$ be one of its second-neighbors.  Let $f$ be a partial coloring of $G$ that leaves $v$ and $w_1$ uncolored, and under which $w_2$ shares one color with $w_3$ and one with $x$.  If $f$ has three extensions to $w_1$, then one of these extensions can itself be extended to $v$.
\end{lemma}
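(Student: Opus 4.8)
The plan is to compare, for each of the three extensions $f_1,f_2,f_3$ of $f$ to $w_1$, the number of candidate labels at $v$ with the number of obstructions, and to show that these three extensions cannot all leave $v$ uncolorable. Write $D$ for the set of colors appearing on $w_2$ and $w_3$. The first hypothesis is used immediately: since $w_2$ shares a color with $w_3$, we have $\size{D}=3$, so even though $w_2$ is already colored, at most $\size{D}+2=5$ colors are nonfree at $v$ under any $f_i$ (the colors of $D$ together with $f_i(w_1)$), leaving at least three free colors. The second hypothesis furnishes the other key reduction: the colors of $D$ lie on the neighbors $w_2,w_3$ and so are nonfree at $v$ under every $f_i$; as $w_2$ shares a color with $x$, the fixed label $f(x)$ contains a color of $D$ and hence is not even a candidate at $v$. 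Thus $x$ never yields an obstruction, and since $v$ has at most six second-neighbors, it has at most five obstructions under each $f_i$.

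First I would dispose of the easy case in which some $f_i(w_1)$ meets $D$. Then the nonfree colors at $v$, namely $D\cup f_i(w_1)$, number at most four, so at least four colors are free and $v$ has at least $\binom{4}{2}=6$ candidates; as there are at most five obstructions, this $f_i$ already extends to $v$. The hard part will be the complementary case, in which every $f_i(w_1)$ avoids $D$ and so lies in the five-element set $P=[8]\setminus D$. Now the free colors at $v$ under $f_i$ are exactly $P\setminus f_i(w_1)$, a three-element set, giving $v$ precisely three candidates, namely the three $2$-subsets of $P$ disjoint from $f_i(w_1)$. With only three candidates against as many as five obstructions, no single extension is guaranteed to succeed, so I must play the three extensions off against one another.

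The clean way to finish is to view the ten $2$-subsets of $P$ as the vertices of the Petersen graph, with two pairs adjacent exactly when disjoint; then the three candidates under $f_i$ are precisely the three neighbors of the vertex $f_i(w_1)$. Since the Petersen graph is strongly regular with $\lambda=0$ and $\mu=1$, a one-line inclusion–exclusion shows that the neighborhoods of any three distinct vertices have union of size at least six. On the other hand, every obstructing label is a candidate realized on a second-neighbor, hence lies in $\binom{P}{2}$; as there are at most five second-neighbors other than $x$, the set $O$ of obstructing labels has $\size{O}\le 5$. If none of $f_1,f_2,f_3$ extended to $v$, then each vertex's Petersen-neighborhood would be contained in $O$, forcing $6\le\size{O}\le5$, a contradiction. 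Hence some $f_i$ extends to $v$, as claimed.
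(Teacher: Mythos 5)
Your proof is correct and follows essentially the same route as the paper's: both arguments discard $x$ as a possible obstruction, split into the same two cases according to whether $w_1$'s label meets the colors on $w_2$ and $w_3$, and in the hard case show that the three extensions together produce at least six distinct candidate labels against at most five obstructions. The only cosmetic difference is that the paper verifies the ``at least six candidates'' claim by direct counting on the free-color sets $S_1,S_2,S_3$, whereas you dualize to the labels on $w_1$ and invoke the strong regularity of the Petersen graph --- a pleasant repackaging of the same combinatorial fact.
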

\begin{proof}
Let $f_1$, $f_2$, and $f_3$ be extensions of $f$ to $w_1$.  Since $w_2$ and $x$ share a color, $x$ cannot yield an obstruction of $v$, so $v$ has at most five different obstructions between all three $f_i$.  Since $w_2$ and $w_3$ share a color, at most five colors appear on neighbors of $v$ in each $f_i$, hence always at least three colors are free at $v$.  Let $S_i$ be the set of free colors at $v$ under $f_i$.  If any $S_i$ contains at least four colors, then $v$ has at least six candidate labels under $f_i$, one of which must be valid.  Otherwise, each $S_i$ has size three; moreover, since the $f_i$ differ in the colors they assign to $w_1$, no two $S_i$ are identical.  $S_1$ and $S_2$ together yield at least five different candidate labels for $v$, and $S_3$ yields a sixth; again we have six candidate labels, one of which must be valid.  Thus some $f_i$ can be extended to $v$.
\end{proof}

We are now ready to present the main proof.

\begin{theorem}
If $G$ is a graph with $\Delta(G)\le 3$, then $\tau_2(G) \le 8$.
\end{theorem}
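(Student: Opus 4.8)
The plan is to build the coloring greedily while isolating the entire difficulty into closing up one carefully chosen cycle. First I would reduce to the case that $G$ is connected and $3$-regular: every graph with maximum degree at most $3$ is a subgraph of some $3$-regular graph, so by monotonicity of $\tau_2$ it suffices to prove the bound for $3$-regular graphs, and we may treat each component separately. A connected $3$-regular graph contains an induced cycle $C$ (for instance, a shortest cycle is chordless). Because $C$ is induced and $G$ is $3$-regular, each vertex of $C$ has exactly two neighbors on $C$ and exactly one neighbor off $C$; the strategy is to color every vertex off $C$ first and then work around $C$.

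To color the off-cycle vertices, I would order them by decreasing distance from $C$ and color one at a time. When a vertex $u$ at distance $d\ge 1$ is reached, its breadth-first parent (a neighbor nearer to $C$) is still uncolored, and that parent's own parent---or, when $d=1$, a cycle-neighbor of $u$'s neighbor on $C$---is an uncolored second-neighbor of $u$. Thus Lemma~\ref{cubic_extend} applies at every step and colors all of $V(G)\setminus V(C)$. A little care is needed to guarantee the uncolored second-neighbor when $u$ sits very close to $C$ or when $C$ is short, but since all of $C$ remains uncolored this is routine.

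Next I would color $C$ itself. Writing $C=v_1v_2\cdots v_\ell$, I would color $v_4,v_5,\ldots,v_\ell$ in order, deliberately leaving the three consecutive vertices $v_1,v_2,v_3$ uncolored until the very end. While coloring $v_j$, the next vertex $v_{j+1}$ (indices modulo $\ell$) is an uncolored neighbor and $v_{j+2}$ is an uncolored second-neighbor, both still uncolored at that stage---the reserved triple $v_1,v_2,v_3$ is exactly what covers the wrap-around cases $j=\ell-1,\ell$---so Lemma~\ref{cubic_extend} again applies throughout. This leaves only the path $v_1v_2v_3$, each of whose vertices still has colored off-cycle neighbors $u_1,u_2,u_3$ and colored cycle-neighbors $v_\ell,v_4$.

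The crux is finishing this last stretch, where Lemma~\ref{cubic_extend} no longer suffices because the final vertices lack uncolored second-neighbors. Here I would deploy the freedom lemmas: while an uncolored second-neighbor is still available I would apply Lemma~\ref{cubic_freedom_0} or Lemma~\ref{cubic_freedom} to manufacture a partial coloring under which one seam vertex admits three distinct extensions, and then invoke Lemma~\ref{cubic_finish_1} to color its neighbor and thereby complete the seam. I expect this last step to be the main obstacle, because the hypotheses of these lemmas do not hold automatically: Lemma~\ref{cubic_finish_1} requires that two already-colored neighbors of the vertex being completed share a color with each other and with a suitable second-neighbor, while the freedom lemmas require that certain second-neighbors not create obstructions. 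Showing that one of these configurations can always be arranged forces a case analysis on the colors already present around $v_1v_2v_3$ and on how the off-cycle neighbors attach (they may coincide, or $C$ may be a short cycle). Managing this case analysis---rather than any single counting inequality---is where the real work lies.
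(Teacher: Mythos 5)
Your skeleton matches the paper's at a high level (color everything off an induced cycle, then work around the cycle with the freedom lemmas and close the seam with Lemma~\ref{cubic_finish_1}), but the step you defer as ``where the real work lies'' is not tedious bookkeeping to be managed later --- it is the entire content of the theorem, and from your starting point it provably cannot be completed. The trouble is that you color the off-cycle vertices greedily, with no control over the labels landing on the off-cycle neighbors $u_i$ of the cycle vertices. Lemma~\ref{cubic_finish_1} has a rigid hypothesis: the two already-colored neighbors of the final vertex must share a color, and one of them must also share a color with a second-neighbor. An uncontrolled greedy coloring can make this unachievable everywhere on the cycle. Concretely, if $G$ has large girth, then consecutive $u_i$'s lie at distance 3 in $G$, so a greedy coloring of $G-V(C)$ may legally assign \emph{every} $u_i$ the same label, say $12$. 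Then every cycle vertex must avoid colors 1 and 2, so at any prospective last vertex $v_i$ the two colored neighbors are $u_i$ (label $12$) and a cycle vertex whose label is disjoint from $12$: they never share a color, Lemma~\ref{cubic_finish_1} is inapplicable no matter how you traverse the cycle, and the direct count also fails (the last vertex can be left with a single candidate label and several potential obstructions). No case analysis around your reserved path $v_1v_2v_3$ can repair this; the coloring was doomed before you ever reached the cycle.

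The idea missing from your proposal is the paper's inductive device for controlling exactly those labels. The paper argues by minimal counterexample over all graphs with $\Delta\le 3$: it deletes $V(C)$ and \emph{adds the edges} $u_{k-1}u_1$ and (when $C$ is not a triangle) $u_ku_2$ to form a smaller graph $G'$, which by minimality has a 2-tone 8-coloring; the added edges force the labels on $u_{k-1},u_1$ (and on $u_k,u_2$) to be disjoint. That disjointness is what collapses the seam into the three tractable cases $(12,12,34)$, $(12,34,56)$, $(12,13,L)$, in each of which $v_1$ can be given a label intersecting the right $u_j$'s so that the hypotheses of Lemmas~\ref{cubic_freedom_0}, \ref{cubic_freedom}, and~\ref{cubic_finish_1} are actually met. (The separate treatment of non-regular graphs and of induced $K_{2,3}$'s, plus minimality of $C$, exist precisely so that $u_{k-1}\ne u_1$ and $u_k\ne u_2$, i.e.\ so that $G'$ still has maximum degree at most 3.) Note also that your opening reduction --- embedding $G$ in a 3-regular supergraph --- forfeits the ability to argue by minimality over this class, which is the only mechanism in sight for gaining the needed control; the paper instead handles non-regular graphs directly by a greedy argument and reserves induction for the cycle-deletion step.
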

\begin{proof}
Suppose otherwise, and let $G$ be a smallest counterexample.  Clearly $G$ is connected and is not $K_4$.

Suppose that $G$ is not 3-regular, and let $v$ be a vertex of degree 1 or 2.  By Lemma \ref{cubic_extend}, iteratively coloring in non-increasing order of distance from $v$ yields a partial 2-tone 8-coloring of $G$ leaving only $N[v]$ uncolored.  Each neighbor $u$ of $v$ now has at least four free colors (hence at least six candidate labels) and at most five second-neighbors, so we may extend the coloring to $u$.  Likewise, $v$ itself now has at least four free colors and at most four second-neighbors, so we may extend to $v$ as well, completing the coloring and contradicting the choice of $G$.  Hence $G$ must be 3-regular.

Next suppose that $G$ contains an induced $K_{2,3}$.  Let $x_1, x_2, y_1, y_2$,
and $y_3$ be the vertices of this $K_{2,3}$, with the $x_i$ the vertices of
degree 3 and the $y_i$ the vertices of degree 2; let $u_i$ be the third
neighbor of each $y_i$.  Let $G^\prime = G - \{x_1, x_2, y_1, y_2, y_3\}$. 
Since $G^\prime$ is not 3-regular, it has a 2-tone 8-coloring, which is also a
partial 2-tone 8-coloring of $G$.  Without loss of generality, the color 1 does
not appear on any $u_i$.  We aim to color each $y_i$ with a label containing
color 1; each $y_i$ has five such candidate labels and at most four
second-neighbors, so this is possible.  Now each $x_i$ has at least four free
colors, and hence at least six candidate labels.  Since each $x_i$ has at most
four second-neighbors, we may extend the coloring to each $x_i$ in turn, again
contradicting the choice of $G$.  Thus $G$ is $K_{2,3}$-free.

Let $C$ be a shortest cycle in $G$; label its vertices $v_1, \ldots, v_k$ in
cycle order.  Let $u_1, \ldots, u_k$ be the neighbors off $C$ of $v_1, \ldots,
v_k$, respectively.  The $u_i$ need not be distinct, but (since $G \not = K_4$)
cannot all be the same vertex.  If $C$ is a triangle, then without loss of
generality $u_3 \not = u_1$.  If not, then for all $i$ we have $u_{i-1} \not =
u_{i+1}$: if $C$ is a four-cycle then this follows from the fact that $G$ is
$K_{2,3}$-free, and otherwise it follows from the minimality of $C$.  In any
case, construct $G^\prime$ from $G$ by deleting the vertices of $C$ and adding
the edge $u_{k-1}u_{1}$ (if it is not already present); if $C$ is not a
triangle, then add the edge $u_ku_2$ as well.  By the minimality of $G$, the
graph $G^\prime$ is 2-tone 8-colorable.  A 2-tone 8-coloring of $G^\prime$ is
also a
partial 2-tone 8-coloring of $G$ in which only the $v_i$ are uncolored and in
which $u_{k-1}$ and $u_1$ have disjoint labels; if $C$ has at least four
vertices, then also $u_k$ and $u_2$ have disjoint labels.  We use such a
coloring as a starting point in producing a 2-tone 8-coloring of $G$.  
 
We have three cases to consider.  {\bf (1)} If the label on $u_k$ is identical
to one of the labels on $u_{k-1}$ or $u_{1}$, then by symmetry we may suppose
that $u_{k-1}$, $u_k$, and $u_{1}$ have labels $12, 12$, and $34$.  {\bf (2)}
If the label on $u_k$ is disjoint from the labels on $u_{k-1}$ and $u_{1}$,
then we may suppose that $u_{k-1}$, $u_k$, and $u_{1}$ have labels $12, 34$,
and $56$.  {\bf (3)} Otherwise, we may suppose that $u_{k-1}$, $u_k$, and
$u_{1}$ have labels $12, 13$, and $L$, where $1 \not \in L$.

{\bf Case (1):} {\em $u_{k-1}$, $u_k$, and $u_{1}$ have labels 12, 12, 34.}  We
aim to assign $v_1$ a label containing either 1 or 2; $v_1$ has nine such
candidate labels, and it has at most four obstructions, so at least five such
labels are valid.  Since we have at least three ways to extend to $v_1$, by
Lemma \ref{cubic_freedom}, we subsequently have at least three ways to extend
to $v_2$, then to $v_3$, and so on up to $v_{k-2}$.  Since the labels on
$u_{k-1}$ and $v_1$ have nonempty intersection, $v_1$ cannot yield an
obstruction of $v_{k-1}$, so again we have three ways to extend to $v_{k-1}$. 
Now applying Lemma \ref{cubic_finish_1} (with $v = v_k, w_1 = v_{k-1}, w_2 =
v_1, w_3 = u_k,$ and $x = u_{k-1}$) lets us complete the coloring.  

{\bf Case (2):} {\em $u_{k-1}$, $u_k$, and $u_{1}$ have labels 12, 34, 56.} 
First suppose that $C$ is a triangle.  Give $v_1$ a label from $\{13, 23, 37,
38\}$; since $v_1$ has at most two obstructions, this is possible.  Next give
$v_2$ a label from $\{45, 46, 47, 48\}$; at most one of these labels has
nonempty intersection with the label on $v_1$, and $v_2$ has at most two
additional obstructions, so again some such label is valid.  We have ensured
that four colors remain free at $v_3$.  Thus $v_3$ has six candidate labels and
at most four obstructions, so we can complete the coloring.

Suppose now that $C$ is not a triangle.  We aim to assign $v_1$ a label from
$\{13, 14, 23, 24\}$.  Although $v_1$ has four colored second-neighbors, $u_k$ has label $34$, which is not an obstruction.  Moreover, by construction the label on $u_2$ contains neither 3 nor 4, so it also cannot be an obstruction.  Thus, at least two such labels are valid.  By Lemma \ref{cubic_freedom_0}, this coloring admits three extensions to $v_2$.  Now we may apply Lemma \ref{cubic_freedom} and Lemma \ref{cubic_finish_1} (with $v = v_k, w_1 = v_{k-1}, w_2 = v_1, w_3 = u_k,$ and $x = u_{k-1}$) as before to complete the coloring.

{\bf Case (3):} {\em $u_{k-1}$, $u_k$, and $u_{1}$ have labels $12, 13, L$,
where $1 \not \in L$.}  We aim to give $v_1$ a label containing either 1 or 3. 
If $3 \not \in L$, then $v_1$ has at least nine such candidates and at most
four obstructions, so at least five of the candidates are valid.  Otherwise
$v_1$ has only five such candidate labels, but $u_k$ does not yield an
obstruction, so at least two of these candidates are valid.  In each case, by
Lemma \ref{cubic_freedom_0} we may extend the coloring to $v_2$ in at least
three different ways.  Now by Lemma \ref{cubic_freedom} and Lemma
\ref{cubic_finish_1} (with $v = v_k, w_1 = v_{k-1}, w_2 = u_k, w_3 = v_1,$ and
$x = u_{k-1}$) we can again complete the coloring.
\end{proof}

\section{General $t$-tone Coloring}\label{ttone}

We next study the behavior of $\tau_t$ for general $t$.  We have already noted that $\tau_t(G)$ is monotone in $G$; that is, $\tau_t(H) \le \tau_t(G)$ whenever $H$ is a subgraph of $G$.  It is also true that $\tau_t(G)$ is monotone in $t$.

\begin{proposition}\label{mono_tone}
If $t < t^{\prime}$ and $G$ is any graph, then $\tau_t(G) \le \tau_{t^\prime}(G)$.
\end{proposition}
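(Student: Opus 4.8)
The plan is to prove the contrapositive-free, constructive statement: any $t'$-tone $k$-coloring can be pared down to a $t$-tone $k$-coloring. The key structural fact is that the constraint $\size{f(u) \cap f(v)} < d(u,v)$ is preserved under passing to subsets of the labels, since intersections can only shrink. Concretely, set $k = \tau_{t'}(G)$ and let $f'\colon V(G) \to {[k] \choose t'}$ be a $t'$-tone $k$-coloring, which exists by definition of $\tau_{t'}(G)$. Since $t < t'$, each label $f'(v)$ has size $t' > t$, so we may choose an arbitrary $t$-element subset $f(v) \subseteq f'(v)$; this defines a function $f\colon V(G) \to {[k] \choose t}$.

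The only thing to verify is that $f$ satisfies Definition~\ref{def_ttone}. For distinct vertices $u$ and $v$, the containments $f(u) \subseteq f'(u)$ and $f(v) \subseteq f'(v)$ give $f(u) \cap f(v) \subseteq f'(u) \cap f'(v)$, and hence $\size{f(u) \cap f(v)} \le \size{f'(u) \cap f'(v)} < d(u,v)$, where the strict inequality is exactly the hypothesis that $f'$ is a valid $t'$-tone coloring. Thus $f$ is a $t$-tone $k$-coloring of $G$, which shows $G$ is $t$-tone $k$-colorable and therefore $\tau_t(G) \le k = \tau_{t'}(G)$.

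There is no real obstacle here; the argument is a one-step subset-monotonicity observation, and the choice of which $t$ colors to discard from each label is immaterial precisely because discarding colors never increases a pairwise intersection. (If one prefers, the same conclusion follows by induction on $t' - t$ using only the single-step case $t' = t+1$, but the direct subset argument handles all $t < t'$ at once.)
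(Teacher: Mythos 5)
Your proof is correct and follows exactly the paper's argument: discard $t'-t$ colors from each label of a $t'$-tone coloring and observe that intersections can only shrink, so the defining inequality $\size{f(u)\cap f(v)} < d(u,v)$ is preserved. You simply spell out the verification more explicitly than the paper does.
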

\begin{proof}
Given a graph $G$ and a $t^\prime$-tone coloring of $G$, we arbitrarily
discard $t^\prime-t$ colors from each label of $G$.  This yields a $t$-tone
coloring, since the process cannot increase the size of the intersection of
any two labels.
\end{proof}

Our first main result in this section is a generalization of Theorem
\ref{2tone_maxdegree}.  In the case $t = 2$, Theorem \ref{2tone_maxdegree} gives
a better bound, since restricting to $t = 2$ allows tighter analysis.

\begin{theorem}\label{general_maxdegree}
For every integer $t$ and every graph $G$, we have $\ttone{t}(G) \le (t^2+t)\Delta(G)$.
\end{theorem}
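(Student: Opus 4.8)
The plan is to extend a partial coloring one vertex at a time, in an arbitrary order, exactly as in the proof of Theorem~\ref{2tone_maxdegree}. Set $k=(t^2+t)\Delta(G)$ and abbreviate $\Delta=\Delta(G)$. The observation that keeps the count finite is that each label has only $t$ colors, so $\size{f(u)\cap f(v)}\le t$ always; hence the requirement $\size{f(u)\cap f(v)}<d(u,v)$ is automatic once $d(u,v)\ge t+1$. Thus, when coloring a vertex $v$, the only constraints come from its neighbors together with the vertices at distance $d$ for $2\le d\le t$.

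First I would handle the neighbors by restricting to free colors. At most $t\Delta$ colors lie on the (at most $\Delta$) neighbors of $v$, so at least $k-t\Delta=t^2\Delta$ colors are free, and any $t$-subset of free colors automatically satisfies every distance-$1$ constraint. This leaves at least $\binom{t^2\Delta}{t}$ candidate labels, and it remains only to show that the distance-$d$ constraints ($2\le d\le t$) rule out fewer than this many.

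For the count, fix $d$ with $2\le d\le t$. A candidate $S$ is ruled out by a vertex $u$ at distance $d$ exactly when $\size{S\cap f(u)}\ge d$; choosing which $d$ colors of $f(u)$ lie in $S$ and then filling out $S$ from the free colors shows that each such $u$ forbids at most $\binom{t}{d}\binom{t^2\Delta}{t-d}$ candidates. Since at most $\Delta(\Delta-1)^{d-1}\le\Delta^d$ vertices lie at distance exactly $d$, the total number of forbidden candidates is at most $\sum_{d=2}^{t}\Delta^d\binom{t}{d}\binom{t^2\Delta}{t-d}$. The crux is to prove this is less than $\binom{t^2\Delta}{t}$. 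Writing $\binom{t^2\Delta}{t-d}/\binom{t^2\Delta}{t}$ as a ratio of falling factorials and simplifying (using $t\Delta-1\ge (t-1)\Delta$), I expect the $d$-th term to be at most $\frac{1}{d!}\bigl(\frac{t}{t-1}\bigr)^{d-2}$, reducing everything to the estimate $\sum_{d\ge2}\frac{1}{d!}\bigl(\frac{t}{t-1}\bigr)^{d-2}<1$.

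The main obstacle is this final inequality, which is tight enough to demand care: the $d=2$ term alone already contributes about $\frac12$, so there is little room to spare. For $t\ge3$ we have $\frac{t}{t-1}\le\frac32$, whence $\sum_{d\ge2}\frac1{d!}(3/2)^{d-2}=\frac49\bigl(e^{3/2}-\frac52\bigr)<1$ closes the estimate; the cases $t\le2$ are immediate, since for $t=2$ only the single $d=2$ term occurs (and in any event Theorem~\ref{2tone_maxdegree} already gives $\tau_2(G)\le\ceil{(2+\sqrt2)\Delta}\le(t^2+t)\Delta$). Once the sum is below $1$ some candidate survives all constraints, the greedy step never fails, and iterating completes the $t$-tone $k$-coloring.
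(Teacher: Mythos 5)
Your proposal is correct and takes essentially the same approach as the paper: greedily color vertices in an arbitrary order, restrict to the at least $t^2\Delta$ free colors to handle neighbors, and union-bound the candidate labels forbidden by vertices at each distance $d$ with $2\le d\le t$, showing the forbidden fraction sums to less than $1$. The only difference is bookkeeping: your looser counts ($\Delta^d$ vertices at distance $d$, and $\binom{t}{d}\binom{t^2\Delta}{t-d}$ forbidden labels each, versus the paper's $\Delta(\Delta-1)^{d-1}$ and $\binom{t}{d}\binom{t^2\Delta-d}{t-d}$) mean your per-term bound carries the extra factor $\bigl(\tfrac{t}{t-1}\bigr)^{d-2}$ — which does hold, and your closing estimate $\tfrac{4}{9}\bigl(e^{3/2}-\tfrac{5}{2}\bigr)<1$ with the separate $t\le 2$ cases is valid — whereas the paper's tighter counts give each term at most $1/d!$ outright.
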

\begin{proof}
Let $V(G) = \{v_1, v_2, \ldots, v_n\}$, let $r = \Delta(G)$, and let $k = (t^2 + t)r$.  As in the proof of Theorem \ref{2tone_maxdegree}, we construct a $t$-tone $k$-coloring of $G$ by coloring iteratively with respect to the ordering $v_1, \ldots, v_n$.
%repeatedly extending a partial $t$-tone $k$-coloring to $v_1, \ldots, v_n$ in order.  

When coloring $v_i$,  at most $tr$ colors appear on neighbors of $v_i$, so at
least $t^2r$ other colors remain.  We have ${t^2r \choose t}$ labels that use
only these colors, and each is a candidate label for $v_i$.  

Given a label $L$, we say that vertex $u$ {\em forbids} $L$ if $L$ and
the label on $u$ have intersection size at least $d(u,v_i)$.  Recall that we
have already discarded all labels forbidden by neighbors of $v_i$.  For $2\le
d\le t$, each vertex at distance $d$ from $v_i$ forbids at most ${t \choose
d}{t^2r-d \choose t-d}$ labels. 
%(This is the number of ways to choose an intersection of
% size $d$, then complete the forbidden label using the remaining colors.)  
At most $r(r-1)^{d-1}$ vertices lie at distance $d$ from $v_i$, so to show that
we may color $v_i$, it suffices to show that $$\sum_{d=2}^t {t \choose
d}{t^2r-d \choose t-d}r(r-1)^{d-1} < {t^2r \choose t},$$ or equivalently, that
$$\sum_{d=2}^t \frac{{t \choose d}{t^2r-d \choose t-d}r(r-1)^{d-1}}{{t^2r \choose t}} < 1.$$
Ultimately, we will show that the $d$th term of the sum is no more than $1/d!$,
and thus (since $1/d!\le 2^{1-d}$) the sum is less than 1.
We first simplify each term.  For fixed $d$,
\begin{align*}
\frac{{t \choose d}{t^2r-d \choose t-d}r(r-1)^{d-1}}{{t^2r \choose t}} &= \frac{t!}{d!(t-d)!} \cdot \frac{(t^2r-d)!}{(t-d)!(t^2r-t)!}\cdot r(r-1)^{d-1} \cdot \frac{t!(t^2r-t)!}{(t^2r)!}\\
                                                                   &= \frac{1}{d!} \cdot \left ( \frac{t!}{(t-d)!} \right )^2  \cdot \frac{(t^2r-d)!}{(t^2r)!} \cdot r(r-1)^{d-1}\\
                                                                   &= \frac{1}{d!} \frac{\left (t(t-1)(t-2) \cdots (t-d+1) \right )^2 r(r-1)^{d-1}}{t^2r(t^2r-1)\cdots (t^2r-d+1)}\\
                                                                   &= \frac{1}{d!} \cdot \frac{(t-1)^2(r-1)}{t^2r-1} \cdot \frac{(t-2)^2(r-1)}{t^2r-2} \cdots \frac{(t-d+1)^2(r-1)}{t^2r-d+1}.
\end{align*}
Now for $i$ between $1$ and $d-1$, we have
$$(t-i)^2(r-1) < (t-i)^2r = t^2r - i(2t-i)r \le t^2r - i,$$
hence
$$\frac{{t \choose d}{t^2r-d \choose t-d}r(r-1)^{d-1}}{{t^2r \choose t}} < \frac{1}{d!} \cdot 1 \cdot 1 \cdots 1 = \frac{1}{d!}.$$
Now
$$\sum_{d=2}^t \frac{{t \choose d}{t^2r-d \choose t-d}r(r-1)^{d-1}}{{t^2r \choose t}} < \sum_{d=2}^t \frac{1}{d!} \le \sum_{d=2}^t\frac{1}{2^{d-1}} < 1,$$
which completes the proof.
\end{proof}

In \cite{BP} it was shown that for every tree $T$, we have $\tau_2(T) = \ceil{(5 + \sqrt{1 + 8\Delta(T)})/2}$.  By Proposition \ref{mono_tone}, it thus follows that $\tau_t(T) \ge \ceil{(5 + \sqrt{1 + 8\Delta(T)})/2}$ whenever $t \ge 2$.  In fact this bound is asymptotically best possible, as we show next.

\begin{theorem}\label{tree_upper}
For every positive integer $t$, there exists a constant $c=c(t)$ such that
for every tree $T$ we have $\tau_t(T)\le c\sqrt{\Delta(T)}$.
\end{theorem}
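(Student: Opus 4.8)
The plan is to color the tree greedily using a rooted ordering, but the naive greedy bound is too weak: at a vertex $v$ of degree $r = \Delta(T)$, the second-neighbors can number as many as $r(r-1)$, which would force roughly $r$ colors rather than $\sqrt r$. To beat this I would exploit the tree structure to control where obstructions come from. Root $T$ at an arbitrary vertex and process vertices in order of increasing depth (breadth-first). When we reach $v$, its unique parent $p$ is already colored, all its children are uncolored, and crucially its second-neighbors split into two types: the grandparent together with the siblings of $v$ (the other children of $p$), which are the second-neighbors ``above'' $v$, and the children of $v$'s children — but those are deeper and hence uncolored. So at coloring time the only colored vertices that can obstruct $v$ are its parent $p$ (a neighbor) and the already-colored second-neighbors through $p$, namely $p$'s parent and the earlier-colored children of $p$.

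The key idea I would pursue is a palette/partition scheme so that the labels forced by the parent edge and by the sibling constraints do not interact destructively. Concretely, since $v$ must share fewer than two colors with each second-neighbor, its label (a $t$-set) must differ from each colored second-neighbor's label in at least $t-1$ positions — equivalently, two labels at distance $2$ may overlap in at most $t-1$ colors. I would first handle the neighbor constraint by reserving for each vertex a pool of free colors disjoint from its parent's label; with $k = c\sqrt r$ total colors, after removing the at most $t$ colors on $p$ we retain about $c\sqrt r$ free colors, giving on the order of $\binom{c\sqrt r}{t} \approx (c\sqrt r)^t/t!$ candidate labels. The number of colored second-neighbors is at most $r$ (the siblings plus the grandparent), and each such neighbor forbids only those candidate labels meeting it in $\ge t$ colors; but a candidate can be forbidden by a second-neighbor only if it agrees with that neighbor's full $t$-set, so each second-neighbor kills at most one candidate that lies entirely in the free pool, or more carefully at most $\binom{t^2 \text{-type bound}}{}$ — the point is the count of forbidden labels per second-neighbor is a constant depending only on $t$, not growing with $r$. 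Thus roughly $r$ forbidden labels must be compared against $\Theta((c\sqrt r)^t)$ candidates, and for $t \ge 2$ and $c$ large enough in terms of $t$ the candidates win.

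The main step, then, is the counting inequality: show that with $k = c\sqrt r$ colors the number of candidate labels available at $v$ strictly exceeds the number forbidden by colored second-neighbors. I would make precise that each colored second-neighbor forbids at most a bounded-in-$r$ number of candidates — the sharpest version is that a second-neighbor $u$ forbids a candidate $L$ only if $|L \cap f(u)| \ge t$, i.e. $L \supseteq f(u)\cap(\text{free colors})$, and bounding the number of such $L$ by something like $\binom{k}{t-1}$ or a constant times a lower power of $k$. The decisive comparison is then of the form (number forbidden) $\le r \cdot C_t k^{t-1}$ versus (number of candidates) $\ge c_t' k^t$, and since $k = c\sqrt r$ gives $k^t / (r k^{t-1}) = k/r = c/\sqrt r$, I need to recheck the exponent bookkeeping: the gain must come from the ratio $\binom{k}{t}/\big(r\cdot(\text{per-neighbor forbidden})\big)$ exceeding $1$, which forces the per-neighbor forbidden count to be at most about $k^{t-1}/r$-small, i.e. genuinely constant in $r$.

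I expect the hard part to be exactly this control of obstructions: proving that each colored second-neighbor forbids only $O_t(1)$ candidate labels (rather than a number growing with $k$), so that the total $O_t(r)$ forbidden labels are dominated by the $\Theta_t(k^t) = \Theta_t(r^{t/2})$ candidates. This is where the tree structure and the ``fewer than $d$ common colors'' condition must be used tightly — a second-neighbor at distance $2$ forbids a candidate only when they share all $t$ colors, which is a single label, but I must also account for candidates excluded because they would collide with $v$'s own descendants indirectly, and verify no farther vertices (distance $\ge 3$) impose binding constraints at coloring time because they are uncolored. Once the per-second-neighbor obstruction count is pinned to a constant, choosing $c = c(t)$ large enough to satisfy $\binom{\lceil c\sqrt r\rceil - t}{t} > (\text{const}_t)\cdot r$ completes the argument; since the left side is $\Theta(r^{t/2})$ and $t \ge 1$ gives $t/2 \ge 1/2$, for $t \ge 2$ we have $t/2 \ge 1$ and the inequality holds for large $r$, with small $r$ absorbed into the constant $c$.
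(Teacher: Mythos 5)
Your proposal has two genuine gaps, and the first one is fatal for every $t \ge 3$. A $t$-tone coloring constrains pairs of vertices at \emph{every} distance $d \le t$ (they must share fewer than $d$ colors), not just neighbors and second-neighbors. In your breadth-first ordering there certainly \emph{are} already-colored vertices at distances $3, \ldots, t$ from $v$ at the moment you color it: the siblings of the parent $p$ (at distance $3$), the great-grandparent (distance $3$), already-colored cousins at distance $4$, and in general on the order of $r^{\lfloor d/2 \rfloor}$ colored vertices at distance $d$. So your claim that ``no farther vertices (distance $\ge 3$) impose binding constraints at coloring time because they are uncolored'' is false, and nothing in your argument prevents $v$'s label from sharing, say, all $t$ of its colors with an uncle's label. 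The paper's proof is organized precisely around this difficulty: it cycles through $t+1$ disjoint palettes, one per level of the tree, so that any two vertices within distance $t$ on \emph{different} levels automatically receive disjoint labels; the only surviving constraints are between same-level vertices at even distance $2d$, of which a tree has at most $r^{d}$ (this is the tree-structure gain you were reaching for), each forbidding at most $\binom{t}{2d}\binom{k-2d}{t-2d}$ labels.

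The second gap is your obstruction count at distance $2$. By Definition~\ref{def_ttone}, vertices at distance $2$ may share at most \emph{one} color, so a colored second-neighbor $u$ forbids every candidate $L$ with $\size{L \cap f(u)} \ge 2$ --- roughly $\binom{t}{2}\binom{k-2}{t-2}$, i.e.\ on the order of $k^{t-2}$, candidates, not $O(1)$. Your assertion that $u$ forbids only labels ``meeting it in $\ge t$ colors'' (a single label) contradicts the definition: it would permit distance-$2$ vertices to share $t-1$ colors. Consequently the per-neighbor obstruction count you insist you need (``genuinely constant in $r$'') is unattainable. What actually saves the counting --- and this is the paper's key trick --- is not that obstruction counts are constant, but that they carry a lower power of the constant $c$: with $k = c\sqrt{r}$, the candidates number about $c^t r^{t/2}/t!$, while the labels forbidden by the (at most $r^{\lfloor d/2\rfloor}$-many) colored vertices at distance $d$ number about $C_t\, c^{t-d} r^{t/2}$; both sides are of order $r^{t/2}$, but the candidate side is a higher-degree polynomial in $c$, so choosing $c = c(t)$ large enough wins. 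With that correction, and with the distance-$\ge 3$ constraints actually counted rather than denied, a BFS-greedy argument along your lines can be completed; as written, however, your proposal establishes the theorem only for $t = 2$.
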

\begin{proof}
Fix a positive integer $t$ and a tree $T$.  Let $k = \sqrt{\Delta(T)}$.  Let
$T^\prime$ be the complete $(\Delta(T)-1)$-ary tree of height $\size{V(T)}$; 
that is, $T^\prime$ is a rooted tree such that all vertices at distance $\size{V(T)}$ from the root are leaves, and all others have $\Delta(T) - 1$ children.  By {\em level $i$} of $T^\prime$ we mean the set of vertices at distance $i$ from the root.
Clearly $T$ is contained in $T^\prime$, so by monotonicity of $\tau_t$ it
suffices to prove that $\tau_t(T^\prime) \le ck$ for some constant $c$ (to be
defined later, but independent of $T$).  Moreover, by Proposition \ref{mono_tone}, we may suppose that $t$ is even.

A {\em palette} is a set of colors.  We color $T^\prime$ using $t+1$ disjoint
palettes, each of size at most $c_1 k$ for some constant $c_1$.  On level $i$ of the
tree we use only those colors in the $i$th palette (with $i$ taken modulo
$t+1$).  This restriction ensures that, whenever $u$ and $v$ are within distance
$t$ of each other, either they lie on the same level of $T^\prime$ or they
receive colors from different palettes (and hence have disjoint labels).  Thus,
we need only consider a single level of $T^\prime$ and show that the vertices
on that level can be colored with at most $c_1 k$ colors.

Within each level, color iteratively with respect to an arbitrary vertex
ordering.  Note that any two vertices on the same level of $T^\prime$ lie at an
even distance.  Fix a vertex $v$ and an integer $d$ between $1$ and $t/2$.   Given a label $L$, say that vertex $u$ {\em forbids} $L$ if $L$ and the label on $u$ have intersection size at least $d(u,v)$.  The
number of vertices at distance $2d$ from $v$, and on the same level as $v$, is
bounded above by $[\Delta(T)]^d$ and hence by $k^{2d}$; each such vertex forbids at
most ${t \choose 2d}{c_1k - 2d \choose t - 2d}$ labels in ${[c_1 k] \choose t}$.  Thus the total number of forbidden labels is at most
$$\sum_{d=1}^{t/2} k^{2d}{t \choose 2d}{c_1k - 2d \choose t - 2d},$$
which is at most
$$k^t \sum_{d=1}^{t/2} \frac{t^{2d}c_1^{t-2d}}{(2d)!(t-2d)!}.$$
We have ${c_1 k \choose t}$ available labels; for fixed $t$ and large $k$, this is
at least $k^t \frac{(c_1-1)^t}{t!}$.  For sufficiently large $c_1$ we have
$$\frac{(c_1-1)^t}{t!} > \sum_{d=1}^{t/2} \frac{t^{2d}c_1^{t-2d}}{(2d)!(t-2d)!},$$
since both sides of the inequality are polynomials in $c_1$, but the left side
has higher degree.  Thus if $c_1$ is large enough, then we can color $v$.
\end{proof}

A graph is {\em $k$-degenerate} if each of its subgraphs contains a vertex of degree at most $k$; trees are precisely the connected 1-degenerate graphs.  For $k \ge 2$, on the class of $k$-degenerate graphs we can improve the bound given by Theorem \ref{general_maxdegree}.

\begin{lemma}\label{degen_dist}
If $G$ is a $k$-degenerate graph, then $G$ has a vertex ordering such that, for
each integer $d\ge 1$ and for each vertex $v$, at most $dk\Delta(G)(\Delta(G) - 1)^{d-2}$ vertices preceding $v$ in the ordering lie at distance $d$ from $v$.
\end{lemma}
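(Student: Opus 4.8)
The plan is to use a degeneracy ordering together with a charging argument on shortest paths. Since $G$ is $k$-degenerate, I would first fix an ordering $v_1,\dots,v_n$ of $V(G)$ in which each vertex has at most $k$ neighbors preceding it; such an ordering exists by repeatedly deleting a vertex of minimum degree and reversing the deletion order. Write $r=\Delta(G)$. Call an edge $xy$ of a path a \emph{descent} (traversed from $x$ to $y$) if $y$ precedes $x$ in this ordering; the defining property of the ordering is that any vertex $x$ has at most $k$ neighbors that precede it, so there are at most $k$ choices for the target of a descent leaving a given vertex.

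The core observation is that if $u$ precedes $v$ and $d(u,v)=d$, then every shortest $v$--$u$ path $v=x_0,x_1,\dots,x_d=u$ must contain at least one descent: otherwise every step would increase the position, forcing $u$ to follow $v$. So I would assign to each such $u$ one shortest path $P_u$ together with the position $j_u\in\{0,\dots,d-1\}$ of, say, its first descent. Because the endpoint $u$ is recovered from $P_u$, the assignment $u\mapsto(P_u,j_u)$ is injective, and hence the number of vertices preceding $v$ at distance $d$ is at most the total number of pairs consisting of a length-$d$ shortest path out of $v$ and a marked descent edge on it.

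It then remains to count these pairs according to the position $j$ of the marked descent. Building the path step by step, the first step offers at most $r$ choices and every later non-descent step at most $r-1$ choices (a shortest path cannot revisit the previous vertex), whereas the marked descent step offers at most $k$ choices. Thus the number of pairs with the descent in position $j$ is at most $k(r-1)^{d-1}$ when $j=0$ and at most $kr(r-1)^{d-2}$ when $1\le j\le d-1$. Summing over the $d$ possible positions and using $k(r-1)^{d-1}\le kr(r-1)^{d-2}$ gives a total of at most $dk r(r-1)^{d-2}=dk\Delta(G)(\Delta(G)-1)^{d-2}$, which is exactly the claimed bound.

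The main thing to get right is the counting and injection bookkeeping rather than any hard inequality: I must ensure that marking a single descent edge (and summing over its location, even though a path may have several descents) yields a genuine upper bound, and I must treat the case $j=0$ separately since the initial step has $r$ rather than $r-1$ available vertices. The factor $d$ in the statement is precisely the union over the $d$ possible locations of the charged descent, and the single factor $k$ replacing one factor of $r-1$ is precisely the saving extracted from degeneracy at that one edge.
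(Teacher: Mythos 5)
Your proposal is correct and follows essentially the same argument as the paper: both use the degeneracy ordering (each vertex has at most $k$ earlier neighbors), observe that any shortest path from $v$ to an earlier vertex must contain a backward step, and bound the count by summing over the position of that step, with $k$ choices at the marked step, $\Delta(G)$ at the first step, and $\Delta(G)-1$ elsewhere. Your write-up is merely more explicit about the injection bookkeeping and the $j=0$ case, which the paper absorbs into a single uniform bound of $k\Delta(G)(\Delta(G)-1)^{d-2}$ per position.
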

\begin{proof}
Construct an ordering of $V(G)$ by repeatedly deleting a vertex $v$ of minimum
degree and prepending $v$ to the ordering.  We claim that this ordering has the desired properties.

Fix $v$ and consider the set of earlier vertices at distance $d$ from $v$.  Each such vertex can be reached from $v$ via a walk of length $d$ in which at least one step moves backward in the ordering.  For each $i$ between 1 and $d$, there are at most $k\Delta(G)(\Delta(G) - 1)^{d-2}$ such walks that move backward on step $i$, since we have at most $k$ choices for the $i$th step, at most $\Delta(G)$ choices for the first, and at most $\Delta(G) -1$ choices for each of the others.
% Since $G$ is $k$-degenerate, each vertex in this ordering has at most $k$ earlier neighbors.
% the argument below is poorly explained, rewrite this paragraph
% We claim that this ordering has the desired properties.  
%
% Since $G$ is $k$-degenerate, the desired bound holds when $d = 1$.  We now use induction on $d$.  Fix
% $v$ and fix an arbitrary vertex $x$ at distance $d$ from $v$.  Consider a
% $v,x$-path in $G$, and let $w$ be the neighbor of $x$ on this path.  We
% have two
% possibilities: either $w$ precedes $v$ in the ordering, or $w$ follows $v$ but
% $x$ precedes $w$.  By the induction hypothesis, at most $(d-1)k(\Delta(G) -
% 1)^{d-2}$ vertices at distance $d-1$ from $v$ precede it in the ordering; each
% of these has $\Delta(G) - 1$ neighbors, so at most $(d-1)k(\Delta(G) -
% 1)^{d-1}$ vertices can be reached along paths of the first type.  
% At most $(\Delta(G)-1)^{d-1}$ vertices at distance $d-1$ from $v$ follow
% it in the ordering, so at most $k(\Delta(G) - 1)^{d-1}$ vertices can be
% reached along paths of the second type.  Summing these quantities yields the
% desired bound.
\end{proof}

When $d$ is large, the bound in Lemma \ref{degen_dist} is worse than the easy
bound of $\Delta(G)(\Delta(G)-1)^{d-1}$ that holds for all graphs $G$,
regardless of degeneracy.  However, when applying Lemma \ref{degen_dist}, we
will mainly care about small values of $d$.

\begin{theorem} \label{degen_maxdegree}
If $G$ is a $k$-degenerate graph, $k \ge 2$, and $\Delta(G) \le r$, then for
every $t$ we have $\tau_t(G) \le kt + kt^2 r^{1 - 1/t}$.
\end{theorem}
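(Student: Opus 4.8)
The plan is to color $G$ greedily with respect to the vertex ordering furnished by Lemma~\ref{degen_dist}, mimicking the counting argument of Theorem~\ref{general_maxdegree} but exploiting two features of degenerate graphs: each vertex has few earlier neighbors (at most $k$), and it has few earlier vertices at each small distance. Concretely, I would set $k_0 = kt + kt^2 r^{1-1/t}$, fix the ordering $v_1, \dots, v_n$ produced in Lemma~\ref{degen_dist} (repeatedly delete and prepend a minimum-degree vertex), and color the vertices in this order. Because $G$ is $k$-degenerate, the neighbors of $v_i$ still present when $v_i$ is deleted number at most $k$, and these are exactly its earlier neighbors; they carry at most $kt$ colors, so at least $m := kt^2 r^{1-1/t}$ colors are free at $v_i$, yielding $\binom{m}{t}$ candidate labels.

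It then remains to show that the earlier vertices at distance $d \ge 2$ forbid fewer than $\binom{m}{t}$ of these candidates. For $2 \le d \le t$, Lemma~\ref{degen_dist} bounds the number of earlier vertices at distance $d$ by $dk\Delta(G)(\Delta(G)-1)^{d-2} \le dk\, r^{d-1}$, and each such vertex forbids at most $\binom{t}{d}\binom{m-d}{t-d}$ candidate labels. So it suffices to prove
$$\sum_{d=2}^t dk\, r^{d-1}\binom{t}{d}\binom{m-d}{t-d} < \binom{m}{t}.$$
Dividing the $d$th summand by $\binom{m}{t}$ and simplifying exactly as in Theorem~\ref{general_maxdegree} gives $\frac{kr^{d-1}}{(d-1)!}\prod_{i=0}^{d-1}\frac{(t-i)^2}{m-i}$. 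The choice $m = kt^2 r^{1-1/t}$ is engineered precisely so that the $i=0$ factor equals $\frac{1}{kr^{1-1/t}}$; I would then verify, just as in the earlier proof, that $\frac{(t-i)^2}{m-i} < \frac{t^2}{m} = \frac{1}{kr^{1-1/t}}$ for every $i \ge 1$ (which holds because $m$ is comfortably larger than $t^2$, namely $m \ge 2t^2$ from $k \ge 2$). Consequently the $d$th summand is at most $\frac{r^{d/t-1}}{(d-1)!\,k^{d-1}}$.

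Finally, since $d \le t$ forces $r^{d/t-1} \le 1$ and $k \ge 2$ forces $k^{d-1} \ge 2^{d-1}$, the whole sum is at most $\sum_{d=2}^t \frac{1}{(d-1)!\,2^{d-1}} < \sum_{j\ge 1}\frac{1}{j!\,2^j} = \sqrt{e}-1 < 1$, so some candidate label is valid and the greedy coloring succeeds. I expect the main obstacle to be the bookkeeping in the summand simplification: verifying the inequality $\frac{(t-i)^2}{m-i} < \frac{t^2}{m}$ and, more delicately, confirming that the final sum stays below $1$ even in the tight case $k=2$. The factor $\frac{1}{2^{d-1}}$ supplied by $k \ge 2$ is exactly what saves the argument, since a cruder bound of $\frac{1}{(d-1)!}$ per term would sum to $e-1 > 1$ and fail.
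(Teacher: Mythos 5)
Your proposal is correct and takes essentially the same route as the paper: greedy coloring along the ordering of Lemma~\ref{degen_dist}, the same count of at most $kt$ colors on earlier neighbors and at most $dkr(r-1)^{d-2}\le dkr^{d-1}$ earlier vertices at distance $d$, and the same binomial-ratio simplification ending in $\sum_{d\ge 2} \frac{1}{(d-1)!\,k^{d-1}} < 1$ for $k\ge 2$. The only cosmetic difference is that you bound each factor $\frac{(t-i)^2}{m-i}$ by $\frac{1}{kr^{1-1/t}}$ and absorb $r^{d-1}$ at the end (giving $r^{d/t-1}\le 1$), whereas the paper distributes $r^{1-1/d}$ into each factor and shows each modified factor is at most $1$; the two manipulations are equivalent.
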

\begin{proof}
Let $c = kt^2r^{1 - 1/t}$.  Let $v_1, \ldots, v_n$ be a vertex ordering of the form guaranteed by Lemma \ref{degen_dist}; we construct a $t$-tone $(c+kt)$-coloring of $G$ by coloring iteratively with respect to this ordering.  

When coloring $v_i$, as many as $kt$ colors may appear on $v_i$'s neighbors; at least $c$ other colors remain.  Thus $v_i$ has at least ${c \choose t}$ candidate labels using these $c$ colors.  As in the proof of Theorem \ref{general_maxdegree}, say that a vertex $u$ {\em forbids} a label $L$ if $L$ and the label on $u$ have intersection of size at least $d(u,v_i)$.  By Lemma \ref{degen_dist}, at most $dkr(r - 1)^{d-2}$ colored vertices lie at distance $d$ from $v_i$; each such vertex forbids at most ${t \choose d}{c-d \choose t-d}$ of the candidates.  %ugh, technically not true, since there may actually be more than c colors available at v_i  ***fixed, but the fix is perhaps too subtle
Thus to show that we can color $v_i$, it suffices to show that 
$$\sum_{d=2}^t {t \choose d}{c-d \choose t-d} dkr(r - 1)^{d-2} < {c \choose t},$$
or equivalently, that
$$\sum_{d=2}^t \frac{{t \choose d}{c-d \choose t-d} dkr(r - 1)^{d-2}}{{c \choose t}} < 1.$$

We proceed as in the proof of Theorem \ref{general_maxdegree}.
\begin{align*}
\frac{{t \choose d}{c-d \choose t-d} dkr(r - 1)^{d-2}}{{c \choose t}} &= \frac{t!}{d!(t-d)!} \cdot \frac{(c-d)!}{(t-d)!(c-t)!} \cdot dkr(r-1)^{d-2} \cdot \frac{t!(c-t)!}{c!}\\
                                                                           &= \frac{dk}{d!} \cdot \left ( \frac{t!}{(t-d)!} \right )^2 \cdot \frac{(c-d)!}{c!} \cdot r(r-1)^{d-2}\\
                                                                           &< \frac{k}{(d-1)!} \cdot \frac{(t(t-1)\cdots(t-d+1))^2}{c(c-1)\cdots(c-d+1)}\cdot r^{d-1}\\
                                                                           &= \frac{k}{(d-1)!} \cdot \frac{t^2r^{1-1/d}}{kt^2r^{1 - 1/t}} \cdots \frac{(t-d+1)^2r^{1-1/d}}{kt^2r^{1 - 1/t} - d + 1}\\
                                                                           &\le \frac{1}{(d-1)! k^{d-1}} \cdot \frac{t^2r^{1-1/d}}{t^2r^{1 - 1/t}} \cdots \frac{(t-d+1)^2r^{1-1/d}}{t^2r^{1 - 1/t} - d + 1}
\end{align*}
For $s$ between 0 and $d-1$, we have
$$(t-s)^2r^{1-1/d} \le (t-s)^2r^{1-1/t} = t^2r^{1-1/t} - s(2t-s)r^{1-1/t} \le t^2r^{1-1/t} - s,$$
so
$$\frac{{t \choose d}{c-d \choose t-d} dkr(r - 1)^{d-2}}{{c \choose t}} < \frac{1}{(d-1)!k^{d-1}}.$$
Thus
$$\sum_{d=2}^t \frac{{t \choose d}{c-d \choose t-d} dkr(r - 1)^{d-2}}{{c \choose t}} < \sum_{d=2}^t \frac{1}{(d-1)!k^{d-1}} < 1,$$
as desired.
\end{proof}

Bickle and Phillips \cite{BP} showed that $\tau_2(K_{1,k}) = \Theta(\sqrt{k})$.  Thus by Proposition \ref{mono_tone}, the bound in Theorem \ref{degen_maxdegree} is asymptotically tight (in terms of $\Delta(G)$) when $t = 2$.  

We have made several statements about the asymptotics of $\tau_t(G)$ when
$t$ is fixed and $\Delta(G)$ grows; we now consider what happens when
$\Delta(G)$ is fixed and $t$ grows.  The bound in Theorem
\ref{general_maxdegree} shows that, for fixed values of $\Delta(G)$, we have
$\tau_t(G) \le ct^2$ for some constant $c$.  Our final result shows that the asymptotics of this bound cannot be improved much, if at all.

\begin{theorem} \label{tree_lower}
For each $r \ge 3$, there exists a constant $c$ such that for all $t$, there is a graph $G$ for which $\Delta(G) = r$ and $\tau_t(G) \ge c t^2 / \lg t$. 
\end{theorem}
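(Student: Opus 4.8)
The plan is to take $G_t$ to be a random $r$-regular graph on $N$ vertices, with $N$ a large polynomial in $t$ that is eventually sent to infinity, and to bound the expected number of valid $t$-tone $k$-colorings by a first-moment computation with $k = ct^2/\lg t$. If this expectation tends to $0$, then some $r$-regular graph admits no such coloring, giving a $G_t$ with $\tau_t(G_t) > k$ and $\Delta(G_t) = r$. Let $Z$ denote the number of valid $t$-tone $k$-colorings of the random graph, and write $X$ for the overlap $|A\cap B|$ of two independent uniformly random $t$-subsets $A,B$ of $[k]$. The governing parameter is $\lambda := \mathbb{E}[X] = t^2/k$: a pair of vertices at distance $d$ is \emph{dangerous} exactly when their labels overlap in at least $d$ colors, an event of probability $\Pr[X \ge d]$.

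The heart of the estimate is to locate the distance scale at which the constraints bite hardest. Heuristically $\frac1N\ln\mathbb{E}[Z] \approx \ln\binom{k}{t} + \frac12\sum_{d=1}^{t} r(r-1)^{d-1}\ln\Pr[X < d]$, since a random $r$-regular graph has about $\frac12 N r(r-1)^{d-1}$ pairs at distance $d$, and a uniform random labeling makes the endpoints of such a pair essentially independent. To first order the negative term equals $-\frac12\sum_d r(r-1)^{d-1}\Pr[X \ge d]$. I would show that $g(d) := (r-1)^d\,\Pr[X \ge d]$ is maximized near $d^\ast \approx (r-1)\lambda$, where a Poisson/Chernoff large-deviation estimate for $X$ gives $g(d^\ast) \approx e^{(r-2)\lambda}$. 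Thus the per-vertex penalty contributed by the constraints is of order $e^{(r-2)\lambda}$, whereas the per-vertex gain is only $\ln\binom{k}{t} \approx t\lg t$.

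Balancing these, $\mathbb{E}[Z] \to 0$ as soon as $e^{(r-2)\lambda} \gg t\lg t$, i.e. $(r-2)t^2/k \gg \lg t$, i.e. $k \ll (r-2)t^2/\lg t$. Hence for $k = ct^2/\lg t$ with $c$ below a threshold $c(r)$ the expectation vanishes, yielding $\tau_t(G_t) \ge ct^2/\lg t$. Two features of the calculation deserve emphasis. First, the optimal scale is $d^\ast \approx (r-1)\lg t = \Theta(\lg t)$, so only a polynomial-in-$t$ number of vertices around each vertex matters, which is what keeps the tree-like approximation valid (it suffices that the girth exceed $2d^\ast$ on almost all vertices, achievable since $N$ is a large polynomial in $t$; one could equally use explicit high-girth Ramanujan graphs for the exact pair counts). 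Second, the exponent $r-2$ is positive precisely when $r \ge 3$, which matches the hypothesis of the theorem: the degenerate path-like case $r=2$ forces no quadratic growth.

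The main obstacle is making the first moment rigorous in the face of two difficulties: the dangerous events for different pairs are not independent, and graph-distance is a global quantity. I would address these by working in the configuration model, where disjoint-edge events are negatively correlated, so that the inequality $\Pr[\text{all constraints hold}] \le \prod\Pr[\text{each holds}]$ survives as an upper bound on $\mathbb{E}[Z]$; and by using that for $N$ a sufficiently large polynomial in $t$ the radius-$d^\ast$ neighborhoods are trees with the exact counts $r(r-1)^{d-1}$ for all but a negligible fraction of vertices, so that short cycles and long-range distance coincidences contribute only lower-order terms. A useful simplification is to retain only the single critical scale $d^\ast$ as a \emph{necessary} condition for validity: the event ``no pair at distance $d^\ast$ overlaps in $\ge d^\ast$ colors'' already furnishes the required penalty $e^{(r-2)\lambda}$, and restricting to it avoids having to control all distances simultaneously.
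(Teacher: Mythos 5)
Your approach has a genuine gap at the step on which the entire computation rests: the inequality $\Pr[\text{all constraints hold}] \le \prod_{\text{pairs}}\Pr[\text{each holds}]$. Your per-vertex penalty $e^{(r-2)\lambda}$ can only be harvested by multiplying the probabilities of roughly $(r-1)^{d^\ast}$ constraints per vertex, and these constraints share vertices heavily; so you need this product bound over strongly overlapping events. Negative correlation of disjoint-edge events in the configuration model does not give it --- that statement concerns the randomness of the graph, whereas the product you need is over the randomness of the labeling (the proposal never fixes which measure the inequality is applied to). Under the i.i.d.\ uniform labeling measure the inequality is false in general: already for $t=1$ and $d^\ast=1$ (proper coloring), a $4$-cycle with $2$ colors has $2$ proper colorings while the product bound would give $16\cdot(1/2)^4=1$; constraint events are \emph{positively} correlated around even cycles. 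Your distance-$d^\ast$ constraint graph is locally dense (each vertex lies in $t^{\Theta(1)}$ constraints) and full of short even cycles, so there is no reason to expect the bound, and nothing in the proposal proves it. The fallback that is rigorous for free --- independence over vertex-disjoint pairs --- is hopelessly weak: it yields at most one constraint per two vertices, hence a penalty of $O(1)$ per vertex against an entropy gain of $\ln\binom{k}{t}=\Theta(t\lg t)$ per vertex. Restricting attention to the single scale $d^\ast$, as you suggest, does not help, because the dependency problem is among the constraints at that one scale.

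For contrast, the paper's proof is a half-page deterministic argument: take $G$ to be the complete $(r-1)$-ary tree of height $\lceil\lg t\rceil$. Since $r\ge 3$, this tree has more than $t$ vertices, and its diameter is at most $2\lceil\lg t\rceil$, so in any $t$-tone coloring every vertex shares fewer than $2\lceil\lg t\rceil$ colors with every other vertex. Listing the vertices in any order, the $i$th vertex then contributes at least $\max\{0,\,t-2\lceil\lg t\rceil i\}$ colors not seen before, and summing gives at least $t^2/(8\lceil\lg t\rceil)$ colors. No randomness and no correlation inequalities are needed. Note also that if you did move to an explicit high-girth $r$-regular graph (Ramanujan, as you mention), the ball of radius $\lceil\lg t\rceil$ around any vertex already contains this complete $(r-1)$-ary tree as a subgraph, and monotonicity of $\tau_t$ under subgraphs finishes the proof immediately --- so the first-moment machinery is not only unjustified as written but also unnecessary.
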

\begin{proof}
Let $G$ be the complete $(r-1)$-ary tree of height $\ceil{\lg t}$.  
%Our argument proceeds as in the proof of Proposition 4 of \cite{BP}.  
Consider a $t$-tone coloring of $G$; examine the vertices of $G$ in any order.  Since any two vertices of $G$ lie within distance $2\ceil{\lg t}$, each vertex we examine shares fewer than $2\ceil{\lg t}$ colors with each vertex already examined.  Thus, the number of colors used in this coloring is at least
$$\sum_{i=0}^{\size{V(G)} - 1} \max\{0, t - 2 \ceil{\lg t} i\}.$$
When $i \le t / (4 \ceil{\lg t})$, the $i$th term of this sum is
at least $t/2$.  Note that $\size{V(G)} > (r-1)^{\lg t} \ge t > t / (4 \ceil{\lg
t})$, so the sum has at least $t / (4 \ceil{\lg t})$ terms.  Thus, the number of colors used is at least $t^2 / (8 \ceil{\lg t})$.
%from which the claim follows.
\end{proof}

\end{document}